\def\rr{{\mathbb R}}
\def\rn{{{\rr}^n}}
\def\zz{{\mathbb Z}}
\def\nn{{\mathbb N}}
\def\cx{{\mathcal X}}
\def\cm{{\mathcal M}}
\def\fz{\infty}
\def\az{\alpha}
\def\supp{{\mathop\mathrm{\,supp\,}}}
\def\loc{{\mathop\mathrm{\,loc\,}}}
\def\lz{\lambda}
\def\dz{\delta}
\def\ez{\epsilon}
\def\kz{\kappa}
\def\bz{\beta}
\def\sz{\sigma}
\def\wz{\widetilde}
\def\hs{\hspace{0.3cm}}
\def\ls{\lesssim}
\def\rbmo{{\mathop\mathrm{RBMO}}}
\def\rblo{{\mathop\mathrm{RBLO}}}
\def\einf{{\mathop{\mathrm{\,essinf\,}}}}
\def\gfz{\genfrac{}{}{0pt}{}}
\def\r{\right}
\def\lf{\left}
\newtheorem{thm}{Theorem}[section]
\newtheorem{lem}{Lemma}[section]
\newtheorem{prop}{Proposition}[section]
\newtheorem{rem}{Remark}[section]
\newtheorem{defn}{Definition}[section]
\numberwithin{equation}{section}
\begin{document}

\arraycolsep=1pt

\title{{\vspace{-5cm}\small\hfill\bf Front. Math. China, to appear}\\
\vspace{4.5cm}\Large\bf Spaces of Type BLO on Non-homogeneous Metric Measure
Spaces\footnotetext{\hspace{-0.35cm} 2000 {\it Mathematics Subject
Classification}. {Primary 42B35; Secondary 42B25, 42B20, 30L99.}
\endgraf{\it Key words and phrases.} upper doubling, geometrically doubling,
${\mathop\mathrm{RBLO}}(\mu)$, maximal operator, Calder\'on-Zygmund
maximal operator.
\endgraf
The first author is supported by the National Natural Science
Foundation (Grant No. 11001234) of China and Chinese Universities
Scientific Fund (Grant No. 2009JS34), and the second (corresponding)
author is supported by the National Natural Science Foundation
(Grant No. 10871025) of China and Program for Changjiang Scholars
and Innovative Research Team in University of China.}}
\author{Haibo Lin and Dachun Yang\,\footnote{Corresponding
author.}}
\date{ }
\maketitle

\begin{center}
\begin{minipage}{13.6cm}\small
{\noindent{\bf Abstract.} Let $({\mathcal X}, d, \mu)$ be a metric
measure space and satisfy the so-called upper doubling condition and
the geometrically doubling condition. In this paper, the authors
introduce the space ${\mathop\mathrm{RBLO}}(\mu)$ and prove that it
is a subset of the known space ${\mathop\mathrm{RBMO}}(\mu)$ in this
context. Moreover, the authors establish several useful
characterizations for the space ${\mathop\mathrm{RBLO}}(\mu)$. As an
application, the authors obtain the boundedness of the maximal
Calder\'on-Zygmund operators from $L^\infty(\mu)$ to
${\mathop\mathrm{RBLO}}(\mu)$.}
\end{minipage}
\end{center}

\section{Introduction\label{s1}}

\hskip\parindent Spaces of homogeneous type were introduced by
Coifman and Weiss \cite{cw71} as a general framework in which many
results from real and harmonic analysis on Euclidean spaces have
their natural extensions; see, for example, \cite{cw77,he,hk}.
Recall that a metric space $(\cx,\,d)$ equipped with a Borel measure
$\mu$ is called a {\it space of homogeneous type} if
$(\cx,\,d,\,\mu)$ satisfies the following {\it measure doubling
condition} that there exists a positive constant $C_\mu$ such that
for all balls $B\subset\cx$,
\begin{equation}\label{e1.1}
0<\mu(2B)\le C_\mu\mu(B),
\end{equation}
where and in what follows, a ball $B\equiv
B(c_B,\,r_B)\equiv\{x\in\cx:\,d(x,\,c_B)<r_B\}$, and for any ball
$B$ and $\rho\in(1,\,\fz)$, $\rho B\equiv B(c_B,\,\rho r_B)$. We
point out that in \cite{cw71} (see also \cite{cw77}), the metric $d$
appeared in the definition of spaces of homogeneous type was assumed
only to be a {\it quasi-metric}. However, in this paper, for
simplicity, we {\it always assume} that $d$ is a metric.

Meanwhile, many classical results concerning the theory of
Calder\'on-Zygmund operators and function spaces have been proved
still valid for non-doubling measures. In particular, let $\mu$ be a
non-negative Radon measure on $\rn$ which only satisfies the {\it
polynomial growth condition} that there exist positive constants $C$
and $\kz\in(0, n]$ such that for all $x\in\rn$ and $r\in(0,\,\fz)$,
$\mu(\{y\in\rn:\,|x-y|<r\})\le C r^\kz$. Such a measure does not
need to satisfy the doubling condition \eqref{e1.1}. The
$L^q(\mu)$-boundedness with $q\in(1,\,\fz)$ of Calder\'on-Zygmund
operators modeled on the Cauchy integral operator with respect to
such a measure, as well as the endpoint spaces of $L^q(\mu)$ scale
and the related mapping properties of operators, have been
successfully developed in this context. Some highlights of this
theory, are the introduction of the Hardy space $H^1$ and its dual
space, the regularized BMO space, by Tolsa \cite{t01}, the proof of
$Tb$ theorem by Nazarov, Treil and Volberg \cite{ntv}, and the
solution of the Painlev\'e problem by Tolsa \cite{t03}.

However, as pointed out by Hyt\"onen in \cite{h10}, notwithstanding
these impressive achievements, the Calder\'on -Zygmund theory with
non-doubling measures is not in all respects a generalization of the
corresponding theory of spaces of homogeneous type. The measures
satisfying the polynomial growth condition are different from, not
general than, the doubling measures.

To include the spaces of homogeneous type and Euclidean spaces with
a non-negative Radon measure satisfying a polynomial condition,
Hyt\"onen \cite{h10} introduced a new class of metric measure spaces
which satisfy the so-called upper doubling condition and the
geometrically doubling condition (see, respectively, Definitions
\ref{d1.1} and \ref{d1.2} below), and a notion of spaces of
regularized BMO. Later, Hyt\"onen and Martikainen \cite{hm} further
established a version of $Tb$ theorem in this setting.

Let $(\cx,\,d,\,\mu)$ be a metric space satisfying the upper
doubling condition and geometrically doubling condition. The main
purpose of this paper is to introduce the space
${\mathop\mathrm{RBLO}}(\mu)$ and prove that it is a subset of the
known space ${\mathop\mathrm{RBMO}}(\mu)$ in this context. Moreover,
we establish several useful characterizations, including the one in
terms of the natural maximal operator, for the space
${\mathop\mathrm{RBLO}}(\mu)$. As an application, we prove that if
the Calder\'on-Zygmund operator is bounded on $L^2(\mu)$, then the
corresponding maximal operator is bounded from $L^\fz(\mu)$ to
$\rblo(\mu)$.

Recently, an atomic Hardy space $H^1(\mu)$ in this setting was
introduced in \cite{hyy10} and it was proved in \cite{hyy10} that
$(H^1(\mu))^*=\rbmo(\mu)$. As an application, the boundedness of
Calder\'on-Zygmund operators from $H^1(\mu)$ to $L^1(\mu)$ was
obtained in \cite{hyy10}.

We now recall the upper doubling space in \cite{h10}.

\begin{defn}\label{d1.1}\rm
A metric measure space $(\cx,\,d,\,\mu)$ is called {\it upper
doubling} if $\mu$ is a Borel measure on $\cx$ and there exists a
dominating function $\lz:\,\cx\times(0,\,\fz)\rightarrow (0,\,\fz)$
and a positive constant $C_\lz$ such that for each $x\in\cx$, $r
\rightarrow \lz(x,\,r)$ is non-decreasing, and for all  $x\in\cx$
and $r\in(0,\,\fz)$,
\begin{equation}\label{e1.2}
\mu(B(x,\,r))\le\lz(x,\,r)\le C_\lz\lz(x,\,r/2).
\end{equation}
\end{defn}

In what follows, we write $\nu\equiv\log_2 C_\lz$ which can be
thought of as a dimension of the measure in some sense.

\begin{rem}\label{r1.1}\rm
\begin{enumerate}
\item[(i)] Obviously, a space of homogeneous type is a special case of the
upper doubling spaces, where one can take the dominating function
$\lz(x,\,r)\equiv\mu(B(x,\,r))$. Moreover, let $\mu$ be a
non-negative Radon measure on $\rn$ which only satisfies the
polynomial growth condition. By taking $\lz(x,\,r)\equiv Cr^\kz$, we
see that $(\rn,\,|\cdot|,\,\mu)$ is also an upper doubling measure
space.

\item[(ii)] It was proved in \cite{hyy10} that there exists a
dominating function $\wz\lz$ related to $\lz$ satisfying the
property that there exists a positive constant $C$ such that for all
$x,\,y\in\cx$ with $d(x,\,y)\le r$,
\begin{equation}\label{e1.3}
\wz\lz(x,\,r)\le C\wz\lz(y,\,r).
\end{equation}
Based on this, in this paper, we {\it always assume} that the
dominating function $\lz$ also satisfies \eqref{e1.3}.
\end{enumerate}
\end{rem}

Throughout the whole paper, we {\it also assume} that the underlying
metric space $(\cx,\,d)$ satisfies the following geometrically
doubling condition.

\begin{defn}\label{d1.2}\rm
A metric space $(\cx,\,d)$ is called {\it geometrically doubling} if
there exists some $N_0\in\nn\equiv\{1,\,2,\,\cdots\}$ such that for
any ball $B(x,\,r)\subset\cx$, there exists a finite ball covering
$\{B(x_i,\,r/2)\}_i$ of $B(x,\,r)$ such that the cardinality of this
covering is at most $N_0$.
\end{defn}

\begin{rem}\label{r1.2}\rm
Let $(\cx,\,d)$ be a metric space. In \cite[Lemma 2.3]{h10},
Hyt\"onen showed that the following statements are mutually
equivalent:
\begin{enumerate}
\item[(i)] $(\cx,\,d)$ is geometrically doubling.

\item[(ii)] For any $\ez\in(0,\,1)$ and any ball $B(x,\,r)\subset\cx$,
there exists a finite ball covering $\{B(x_i,\,\ez r)\}_i$ of
$B(x,\,r)$ such that the cardinality of this covering is at most
$N_0\ez^{-n}$, where and in what follows, $N_0$ is as in Definition
\ref{d1.2} and $n\equiv\log_2 N_0$.

\item[(iii)] For every $\ez\in(0,\,1)$, any ball $B(x,\,r)\subset\cx$
can contain at most $N_0\ez^{-n}$ centers $\{x_i\}_i$ of disjoint
balls with radius $\ez r$.

\item[(iv)] There exists $M\in\nn$ such that any ball
$B(x,\,r)\subset\cx$ can contain at most $M$ centers $\{x_i\}_i$ of
disjoint balls $\{B(x_i,\,r/4)\}^M_{i=1}$.

\end{enumerate}
\end{rem}

It is well known that spaces of homogeneous type are geometrically
doubling spaces; see \cite[p. 67]{cw71}. Conversely, if $(\cx,\,d)$
is a complete geometrically doubling metric spaces, then there
exists a Borel measure $\mu$ on $\cx$ such that $(\cx,\,d,\,\mu)$ is
a space of homogeneous type; see \cite{ls} and \cite{w}.

A metric measure space $(\cx,\,d,\,\mu)$ is called a {\it
non-homogeneous metric measure space} in this paper, if $\mu$ is
upper doubling and $(\cx,\,d)$ is geometrically doubling. The
motivation to develop a harmonic analysis on non-homogeneous metric
measure spaces can be found in \cite{h10} and also in
\cite{t04,cw77,cw71}.

The paper is organized as follows. Let $(\cx,\,d,\,\mu)$ be a
non-homogeneous metric measure space. In Section 2, we introduce the
space $\rblo(\mu)$ and obtain some useful properties of this space.
In Section 3, a characterization of $\rblo(\mu)$ in terms of the
natural maximal operator is established. In Section 4, we obtain the
boundedness of the maximal Calder\'on-Zygmund operators from
$L^\fz(\mu)$ to $\rblo(\mu)$.

Finally, we make some convention on symbols. Throughout the paper,
we denote by $C$, $\wz C$, $c$ and $\wz c$ {\it positive constants}
which are independent of the main parameters, but they may vary from
line to line. {\it Constant with subscript}, such as $C_1$, does not
change in different occurrences. If $f\le Cg$, we then write $f\ls
g$ or $g\gtrsim f$; and if $f\ls g\ls f$, we then write $f\sim g$.
Also, for any subset $E\subset\cx$, $\chi_E$ denotes the {\it
characteristic function} of $E$.

\section{The spaces $\rblo(\mu)$}\label{s2}

\hskip\parindent In this section, we introduce the space
$\rblo(\mu)$ and establish its several equivalent characterizations.

We begin with the coefficients $\dz(B,\,S)$ for all balls $B\subset
S$ which were introduced by Hyt\"onen in \cite{h10} as analogues of
Tolsa's numbers $K_{Q,\,R}$ from \cite{t01}; see also \cite{hyy10}.

\begin{defn}\label{d2.1}\rm
For all balls $B\subset S$, let
\begin{equation*}
\dz(B,\,S)\equiv\int_{(2S)\setminus
B}\frac{d\mu(x)}{\lz(c_B,\,d(x,\,c_B))}.
\end{equation*}
\end{defn}

To recall some useful properties of $\dz$ proved in \cite{hyy10}, we
first recall the notion of the $(\az,\,\bz)$-doubling property.
Though the measure condition \eqref{e1.1} is not assumed uniformly
for all balls in the non-homogeneous metric measure space
$(\cx,\,d,\,\mu)$, it was shown in \cite{h10} that there are still
many small and large balls that have the following
$(\az,\,\bz)$-doubling property.

\begin{defn}\label{d2.2}\rm
Let $\az,\,\bz\in(1,\,\fz)$. A ball $B(x,\,r)\subset\cx$ is called
{\it $(\az,\,\bz)$-doubling} if $\mu(\az B)\le\bz\mu(B)$.
\end{defn}

To be precise, it was proved in \cite{h10} that if a metric measure
space $(\cx,\,d,\,\mu)$ is upper doubling and
$\bz>C_\lz^{\log_2\az}=\az^\nu$, then for every ball
$B(x,\,r)\subset\cx$, there exists some
$j\in\zz_+\equiv\nn\cup\{0\}$ such that $\az^j B$ is
$(\az,\,\bz)$-doubling. Moreover, let $(\cx,\,d)$ be geometrically
doubling, $\bz>\az^n$ with $n\equiv\log N_0$ and $\mu$ a Borel
measure on $\cx$ which is finite on bounded sets. Hyt\"onen
\cite{h10} also showed that for $\mu$-almost every $x\in\cx$, there
exist arbitrarily small $(\az,\,\bz)$-doubling balls centered at
$x$. Furthermore, the radius of these balls may be chosen to be of
the form $\az^{-j}r$ for $j\in\nn$ and any preassigned number
$r\in(0,\,\fz)$. Throughout this paper, for any $\az\in(1,\,\fz)$
and ball $B$, ${\wz B}^\az$ denotes the {\it smallest
$(\az,\,\bz_\az)$-doubling ball of the form $\az^j B$ with
$j\in\zz_+$}, where
\begin{equation}\label{e2.1}
\bz_\az\equiv\max\lf\{\az^n,\,\az^\nu\r\}+30^n+30^\nu
=\az^{\max\{n,\,\nu\}}+30^n+30^\nu.
\end{equation}

The following useful properties of $\dz$ were proved in
\cite{hyy10}.

\begin{lem}\label{l2.1}
\begin{enumerate}
\item[\rm(i)] For all balls $B\subset R\subset S$,
$\dz(B,\,R)\le\dz(B,\,S)$.

\item[\rm(ii)] For any $\rho\in[1,\,\fz)$, there exists a positive
constant $C$, depending on $\rho$, such that for all balls $B\subset
S$ with $r_S\le\rho r_B$, $\dz(B,\,S)\le C$.

\item[\rm(iii)] For any $\az\in(1,\,\fz)$, there exists a positive
constant $\wz C$, depending on $\az$, such that for all balls $B$,
$\dz(B,\,{\wz B}^\az)\le {\wz C}$.

\item[\rm(iv)] There exists a positive constant $c$ such that for all
balls $B\subset R\subset S$, $\dz(B,\,S)\le\dz(B,\,R)+c\dz(R,\,S)$.
In particular, if $B$ and $R$ are concentric, then $c=1$.

\item[\rm(v)] There exists a positive constant $\wz c$ such that for all
balls $B\subset R\subset S$, $\dz(R,\,S)\le{\wz c}[1+\dz(B,\,S)]$;
moreover, if $B$ and $R$ are concentric, then
$\dz(R,\,S)\le\dz(B,\,S)$.
\end{enumerate}
\end{lem}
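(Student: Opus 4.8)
The plan is to derive each of (i)--(v) directly from Definition \ref{d2.1}, using only the monotonicity of $r\mapsto\lz(c_B,\,r)$, the doubling inequality \eqref{e1.2}, and the comparability \eqref{e1.3}. Assertion (i) is immediate: $B\subset R\subset S$ yields $2R\subset 2S$, so the integration domain $(2R)\setminus B$ of $\dz(B,\,R)$ sits inside the domain $(2S)\setminus B$ of $\dz(B,\,S)$, while the non-negative integrand $1/\lz(c_B,\,d(\cdot,\,c_B))$ is unchanged. For (ii) I would first trap $2S$ in a fixed dilate of $B$: since $c_B\in S$ forces $d(c_B,\,c_S)<r_S\le\rho r_B$, every $x\in 2S$ obeys $d(x,\,c_B)<2r_S+\rho r_B\le 3\rho r_B$. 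Cutting $B(c_B,\,3\rho r_B)\setminus B(c_B,\,r_B)$ into the roughly $\log_2(3\rho)$ dyadic annuli $\{2^k r_B\le d(x,\,c_B)<2^{k+1}r_B\}$ and using $\lz(c_B,\,d(x,\,c_B))\ge\lz(c_B,\,2^k r_B)$ together with $\mu(B(c_B,\,2^{k+1}r_B))\le\lz(c_B,\,2^{k+1}r_B)\le C_\lz\lz(c_B,\,2^k r_B)$, each annulus contributes at most $C_\lz$, and the finite sum depends only on $\rho$.

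Part (iii) is the crux. Write $\wz B^\az=\az^N B$. The key is that, since $\wz B^\az$ is the \emph{smallest} $(\az,\,\bz_\az)$-doubling ball of this form, every intermediate ball $\az^j B$ with $0\le j<N$ fails the doubling condition, i.e. $\mu(\az^{j+1}B)>\bz_\az\,\mu(\az^j B)$, whence the geometric bound $\mu(\az^j B)\le\bz_\az^{-(N-j)}\mu(\az^N B)$. I would split $(2\wz B^\az)\setminus B$ into the concentric annuli $(\az^{j+1}B)\setminus(\az^j B)$ (the outermost piece $(2\az^N B)\setminus(\az^N B)$ being estimated as in (ii)). On the $j$-th annulus one has $\lz(c_B,\,d(x,\,c_B))\ge\lz(c_B,\,\az^j r_B)$, and iterating \eqref{e1.2} shows $\lz(c_B,\,\az^N r_B)\ls\az^{\nu(N-j)}\lz(c_B,\,\az^j r_B)$; together with $\mu(\az^N B)\le\lz(c_B,\,\az^N r_B)$ this bounds the $j$-th term by a constant times $\bz_\az(\az^\nu/\bz_\az)^{N-j}$. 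Because \eqref{e2.1} gives $\bz_\az>\az^{\max\{n,\,\nu\}}\ge\az^\nu$, the ratio $\az^\nu/\bz_\az$ is strictly below $1$, so the sum over $j$ is a convergent geometric series whose value depends only on $\az$; this is precisely where the specific size of $\bz_\az$ is used to overcome the growth of $\lz$.

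Parts (iv) and (v) both reduce to peeling off the annulus beyond $2R$ and comparing two integrands that differ only in their center. For (iv) I would write $(2S)\setminus B=[(2R)\setminus B]\cup[(2S)\setminus(2R)]$, the first piece being exactly $\dz(B,\,R)$. On the second piece $d(x,\,c_R)\ge 2r_R$ and $d(c_B,\,c_R)<r_R$ give $\frac12 d(x,\,c_R)\le d(x,\,c_B)\le\frac32 d(x,\,c_R)$, so monotonicity, \eqref{e1.2} and \eqref{e1.3} yield $\lz(c_B,\,d(x,\,c_B))\gs\lz(c_R,\,d(x,\,c_R))$; since $(2S)\setminus(2R)\subset(2S)\setminus R$, the second piece is $\ls\dz(R,\,S)$, which is the claim. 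For (v), the decomposition $(2S)\setminus R=[(2R)\setminus R]\cup[(2S)\setminus(2R)]$ has first piece equal to $\dz(R,\,R)\ls 1$ by (ii) with $\rho=1$, while the second piece is bounded by a constant times $\dz(B,\,S)$ by the same center-comparison run in the reverse direction, using $(2S)\setminus(2R)\subset(2S)\setminus B$.

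The routine but delicate book-keeping of the constants from \eqref{e1.2} and \eqref{e1.3} in the center-switch of (iv)--(v) is the main secondary difficulty, but the genuine obstacle is (iii): one must verify that the defining property of $\wz B^\az$ really produces fast \emph{geometric} decay of $\mu(\az^j B)$ as $j$ decreases, and that this decay strictly outpaces the factor $\az^{\nu(N-j)}$ coming from the doubling of $\lz$, which is exactly guaranteed by the choice \eqref{e2.1}. Finally, in the concentric case of both (iv) and (v) the two integrands coincide, so only the domain inclusions survive and they hand over the sharp constants $c=1$ and $\dz(R,\,S)\le\dz(B,\,S)$ at once.
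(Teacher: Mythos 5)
This paper contains no proof of Lemma \ref{l2.1} to compare against: the lemma is quoted from \cite{hyy10} (``The following useful properties of $\dz$ were proved in \cite{hyy10}''), where it is established following Tolsa's argument for the analogous coefficients $K_{Q,\,R}$ in \cite{t01}. Your argument is, in essence, that standard proof, and it is correct. For (ii), covering $(2S)\setminus B$ by the roughly $\log_2(3\rho)$ dyadic annuli around $c_B$, each contributing at most $C_\lz$ by \eqref{e1.2}, is exactly right. In (iii) you correctly isolate and resolve the only genuinely delicate point: minimality of ${\wz B}^\az=\az^N B$ forces every intermediate $\az^jB$ to fail doubling, giving $\mu(\az^jB)\le\bz_\az^{-(N-j)}\mu(\az^NB)$, while iterating \eqref{e1.2} gives $\lz(c_B,\,\az^Nr_B)\ls\az^{\nu(N-j)}\lz(c_B,\,\az^jr_B)$, so the annuli sum to a geometric series with ratio $\az^\nu/\bz_\az<1$ --- precisely what the choice \eqref{e2.1} guarantees. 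For (iv) and (v), peeling off $(2S)\setminus(2R)$ and switching centers there is legitimate, since for $x\notin 2R$ one has $d(c_B,\,c_R)<r_R\le\frac12 d(x,\,c_R)$, so monotonicity, \eqref{e1.2} and \eqref{e1.3} make $\lz(c_B,\,d(x,\,c_B))$ and $\lz(c_R,\,d(x,\,c_R))$ comparable in both directions; and in the concentric cases the integrands coincide, so only the domain inclusions remain and the sharp constants follow.

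One step deserves a flag, in (i): passing from $R\subset S$ to $2R\subset 2S$ is not a consequence of set inclusion alone in a general metric space, since a ball of large radius may, as a set, lie inside a ball of smaller radius (this can happen in bounded or disconnected spaces, and then the extra region $2R\setminus 2S$ could make $\dz(B,\,R)>\dz(B,\,S)$). So (i), as stated here and in \cite{hyy10}, tacitly relies on the convention that balls carry fixed centers and radii whose inclusions behave compatibly under dilation. This is a defect inherited from the statement in the literature rather than from your proof, but it is worth a remark if you write the argument out in full.
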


Inspired by the work of \cite{j05,hyy,h10}, we introduce the space
$\rblo(\mu)$ as follows. In what follows, $L^1_\loc(\mu)$ denotes
the {\it space of all $\mu$-locally integrable functions}.

\begin{defn}\label{d2.3}\rm
Let $\eta,\,\rho\in(1,\,\fz)$, and $\bz_\rho$ be as in \eqref{e2.1}.
A real-valued function $f\in L^1_{\loc}(\mu)$ is said to be in the
{\it space $\rblo(\mu)$} if there exists a non-negative constant $C$
such that for all balls $B$,
\begin{equation}\label{e2.2}
\frac{1}{\mu(\eta B)}\int_B\lf[f(y)-{\mathop\einf_{{\wz
B}^\rho}}f\r]\,d\mu(y)\le C,
\end{equation}
and that for all $(\rho,\,\bz_\rho)$-doubling balls $B\subset S$,
\begin{equation}\label{e2.3}
{\mathop\einf_B}f-{\mathop\einf_S}f\le C[1+\dz(B,\,S)].
\end{equation}
Moreover, the $\rblo(\mu)$ {\it norm} of $f$ is defined to be the
minimal constant $C$ as above and denoted by $\|f\|_{\rblo(\mu)}$.
\end{defn}

\begin{rem}\label{r2.1}\rm
\begin{enumerate}
\item[(i)] It is obvious that $L^\fz(\mu)\subset\rblo(\mu)$. Moreover,
if $f\in\rblo(\mu)$, then $f+C$ with any fixed $C\in\rr$ also
belongs to $\rblo(\mu)$ and $\|f+C\|_{\rblo(\mu)}
=\|f\|_{\rblo(\mu)}$. Based on this, in this paper, we identify $f$
with its {\it equivalent class} $\{f+C:\ C\in\rr\}$, namely, we
regard $\rblo(\mu)$ as the {\it quotient space} $\rblo(\mu)/\rr$.

\item[(ii)] The classical space ${\mathop\mathrm{BLO}}(\rn)$ is defined
by Coifman and Rochberg \cite{cr80}. Let $\mu$ be a non-negative
Radon measure on $\rn$ which only satisfies the polynomial growth
condition. In the setting of $(\rn,\,|\cdot|,\,\mu)$, the space
$\rblo(\mu)$ was first introduced by Jiang \cite{j05} and improved
by \cite{hyy}. Moreover, in this setting, the space $\rblo(\mu)$
defined as in Definition \ref{d2.3} is just the one introduced in
\cite{hyy}.

\item[(iii)] The definition of $\rblo(\mu)$ is independent of the
choice of the constants $\eta,\,\rho\in(1,\,\fz)$; see Propositions
\ref{p2.1} and \ref{p2.2} below.
\end{enumerate}
\end{rem}

Let $\eta\in(1,\,\fz)$. Suppose that for any given $f\in
L^1_\loc(\mu)$, there exist a non-negative constant $\wz C$ and a
real number $f_B$ for any ball $B$ such that for all balls $B$,
\begin{equation}\label{e2.4}
\frac{1}{\mu(\eta B)}\int_B\lf[f(y)-f_B\r]\,d\mu(y)\le \wz C,
\end{equation}
that for all balls $B\subset S$,
\begin{equation}\label{e2.5}
|f_B-f_S|\le{\wz C}\,[1+\dz(B,\,S)],
\end{equation}
and that for all balls $B$,
\begin{equation}\label{e2.6}
f_B\le{\mathop\einf_B}f.
\end{equation}
We then define the {\it norm} $\|f\|_{**,\,\eta}\equiv\inf\{\wz
C\}$, where the infimum is taken over all the  non-negative
constants $\wz C$ as above.

\begin{prop}\label{p2.1}
The norm $\|\cdot\|_{**,\,\eta}$ is independent of the choice of the
constant $\eta\in(1,\,\fz)$.
\end{prop}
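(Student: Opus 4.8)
The plan is to prove that for any $1<\eta_1\le\eta_2<\fz$ the norms $\|\cdot\|_{**,\eta_1}$ and $\|\cdot\|_{**,\eta_2}$ are mutually equivalent, with constants depending only on $\eta_1,\eta_2$ and the structural constants of $(\cx,\,d,\,\mu)$; since $\eta_1,\eta_2$ are arbitrary, this yields that all the norms $\{\|\cdot\|_{**,\eta}\}_{\eta\in(1,\fz)}$ are equivalent, which is the assertion. The structural fact driving the whole argument is \eqref{e2.6}: since $f_B\le\einf_B f\le f(y)$ for $\mu$-a.e. $y\in B$, the integrand $f(y)-f_B$ in \eqref{e2.4} is non-negative $\mu$-a.e. on $B$, so the left-hand side of \eqref{e2.4} is monotone under enlargement of the dilate $\eta B$. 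Note also that \eqref{e2.5} and \eqref{e2.6} do not involve $\eta$ at all, so only \eqref{e2.4} has to be re-examined when $\eta$ changes.

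One inequality is immediate. Fix an admissible pair $(\{f_B\}_B,\,\wz C)$ for $\eta_1$. Since $\eta_1 B\subset\eta_2 B$ gives $\mu(\eta_1 B)\le\mu(\eta_2 B)$ and the integrand is non-negative, replacing the denominator $\mu(\eta_1 B)$ by the larger $\mu(\eta_2 B)$ only decreases the quotient; hence \eqref{e2.4} still holds for $\eta_2$ with the same $\wz C$, and so $\|f\|_{**,\eta_2}\le\|f\|_{**,\eta_1}$.

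The reverse inequality is where the work lies, since passing to the smaller dilate $\eta_1$ shrinks the denominator and the same pair cannot be reused directly. Here I use a covering argument. Set $\delta\equiv(\eta_1-1)/\eta_2\in(0,1)$ and cover $B=B(c_B,\,r_B)$ by finitely many balls $B_i\equiv B(x_i,\,\delta r_B)$, $i=1,\dots,M$, with centers $x_i\in B$; taking the $x_i$ to be a maximal $\delta r_B$-separated subset of $B$ produces such a covering with $M$ controlled by a constant depending only on $\delta$ and $N_0$ (Remark \ref{r1.2}(iii)). The choice of $\delta$ is dictated by the requirement $\eta_2 B_i\subset\eta_1 B$, which holds because $x_i\in B$, so that $\mu(\eta_2 B_i)\le\mu(\eta_1 B)$. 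Decomposing $B$ into the disjoint sets $E_i\equiv(B\cap B_i)\setminus\bigcup_{j<i}B_j$ and writing $f-f_B=(f-f_{B_i})+(f_{B_i}-f_B)$ on $E_i$, I estimate the two resulting sums separately. For the first, \eqref{e2.6} gives $f-f_{B_i}\ge0$ $\mu$-a.e. on $B_i$, so $\int_{E_i}(f-f_{B_i})\,d\mu\le\int_{B_i}(f-f_{B_i})\,d\mu\le\wz C\,\mu(\eta_2 B_i)\le\wz C\,\mu(\eta_1 B)$ by \eqref{e2.4} for $\eta_2$; summing over the $M$ terms costs only the factor $M$.

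For the second sum I need a bound on $|f_{B_i}-f_B|$, and since in general neither $B_i\subset B$ nor $B\subset B_i$, \eqref{e2.5} cannot be applied to the pair $(B_i,\,B)$ directly. The remedy is to interpose the common ball $\eta_1 B$: by \eqref{e2.5}, $|f_{B_i}-f_B|\le\wz C[2+\dz(B_i,\,\eta_1 B)+\dz(B,\,\eta_1 B)]$, and since the radii of $B_i$ and of $B$ are each comparable to that of $\eta_1 B$, Lemma \ref{l2.1}(ii) bounds both $\dz$-coefficients by a constant depending only on $\eta_1$ and $\delta$. Hence $\sum_i(f_{B_i}-f_B)\mu(E_i)\le C\wz C\sum_i\mu(E_i)=C\wz C\,\mu(B)\le C\wz C\,\mu(\eta_1 B)$. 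Combining the two sums shows that \eqref{e2.4} holds for $\eta_1$ with the same family $\{f_B\}$ and constant $(M+C)\wz C$, whence $\|f\|_{**,\eta_1}\le(M+C)\|f\|_{**,\eta_2}$ after taking the infimum. The main obstacle is exactly this direction: matching the denominators forces $\delta=(\eta_1-1)/\eta_2$, and the non-nestedness of $B_i$ and $B$ forces the detour through $\eta_1 B$ together with the uniform $\dz$-bounds of Lemma \ref{l2.1}(ii); all the remaining estimates are pure monotonicity coming from the sign condition \eqref{e2.6}.
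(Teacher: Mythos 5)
Your proof is correct and follows essentially the same route as the paper's: the easy direction by monotonicity of the denominator (using the sign condition \eqref{e2.6}), and the hard direction by covering $B$ with balls $B_i$ of radius $\sim\frac{\eta_1-1}{\eta_2}r_B$ centered in $B$ so that $\eta_2 B_i\subset\eta_1 B$, then comparing $f_{B_i}$ with $f_B$ through the common ball $\eta_1 B$ via \eqref{e2.5} and Lemma \ref{l2.1}. The only differences (disjointifying the cover into the sets $E_i$, invoking Remark \ref{r1.2}(iii) rather than (ii), and applying Lemma \ref{l2.1}(ii) directly instead of passing through $\rho B_i$ with Lemma \ref{l2.1}(iv)) are cosmetic.
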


\begin{proof}
Let $\rho>\eta>1$ be some fixed constants. Obviously,
$\|f\|_{**,\,\rho}\le\|f\|_{**,\,\eta}$. So we only have to show
that $\|f\|_{**,\,\eta}\ls\|f\|_{**,\,\rho}$.

For the norm $\|f\|_{**,\,\rho}$, there exists a fixed collection
$\{f_B\}_B$ of real numbers  satisfying \eqref{e2.4} through
\eqref{e2.6} with the constant $\wz C$ replace by
$\|f\|_{**,\,\rho}$. Fix $\ez\in(0,\,(\eta-1)/\rho)$ and consider a
fixed ball $B_0\equiv B(x_0,\,r)$. Then, by Remark \ref{r1.2}(ii),
there exists a family $\{B_i\equiv B(x_i,\,\ez r):\,x_i\in
B_0\}_{i\in I}$ of balls, which cover $B_0$, where $\sharp I\le
N_0\ez^{-n}$. Here and in what follows, for any set $I$, we use
$\sharp I$ to denote the {\it cardinality} of $I$. Moreover, $\rho
B_i=B(x_i,\,\ez\rho r)\subset B(x_0,\,\eta r)=\eta B_0$, since
$r+\ez\rho r<\eta r$. By this, \eqref{e2.5}, and (ii) and (iv) of
Lemma \ref{l2.1}, we have that
\begin{align*}
|f_{B_i}-f_{B_0}|&\le|f_{B_i}-f_{\eta B_0}|+|f_{\eta
B_0}-f_{B_0}|\\
&\le\|f\|_{**,\,\rho}[2+\dz(B_i,\,\eta
B_0)+\dz(B_0,\,\eta B_0)]\\
&\ls\|f\|_{**,\,\rho}[1+\dz(B_i,\,\rho B_i)+\dz(\rho B_i,\,\eta
B_0)]\ls\|f\|_{**,\,\rho}.
\end{align*}
Thus, by this estimate and $\rho B_i\subset\eta B_0$ again, we
obtain
\begin{align*}
\int_{B_0}|f(y)-f_{B_0}|\,d\mu(y)&\le\sum_{i\in
I}\int_{B_i}|f(y)-f_{B_0}|\,d\mu(y)\\
&\le\sum_{i\in
I}\lf\{\int_{B_i}|f(y)-f_{B_i}|\,d\mu(y)+\mu(B_i)|f_{B_i}-f_{B_0}|\r\}\\
&\ls\sum_{i\in I}\|f\|_{**,\,\rho}\mu(\rho B_i)\ls
\|f\|_{**,\,\rho}\mu(\eta B_0),
\end{align*}
which, together with \eqref{e2.6} and the fact that \eqref{e2.5}
holds with the constant $\wz C$ replaced by $\|f\|_{**,\,\rho}$,
yields that $\|f\|_{**,\,\eta}\ls\|f\|_{**,\,\rho}$. This finishes
the proof of Proposition \ref{p2.1}.
\end{proof}

Based on Proposition \ref{p2.1}, from now on, we write
$\|\cdot\|_{**}$ instead of $\|\cdot\|_{**,\,\eta}$.

\begin{prop}\label{p2.2}
Let $\eta,\,\rho\in(1,\,\fz)$, and $\bz_\rho$ be as in \eqref{e2.1}.
Then the norms $\|\cdot\|_{**}$ and $\|\cdot\|_{\rblo(\mu)}$ are
equivalent.
\end{prop}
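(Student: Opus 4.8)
The plan is to prove the two inequalities $\|f\|_{**}\ls\|f\|_{\rblo(\mu)}$ and $\|f\|_{\rblo(\mu)}\ls\|f\|_{**}$ separately.

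For the first, given $f\in\rblo(\mu)$, I would take the distinguished numbers to be $f_B\equiv\einf_{{\wz B}^\rho}f$ for every ball $B$ (these are finite because \eqref{e2.2} together with $f\in L^1_\loc(\mu)$ prevents the essential infimum from being $-\fz$). With this choice, \eqref{e2.4} is literally \eqref{e2.2}, while \eqref{e2.6} is immediate from $B\subset{\wz B}^\rho$, which forces $\einf_{{\wz B}^\rho}f\le\einf_B f$. The whole difficulty is concentrated in \eqref{e2.5}, namely in showing $|\einf_{{\wz B}^\rho}f-\einf_{{\wz S}^\rho}f|\ls\|f\|_{\rblo(\mu)}[1+\dz(B,S)]$ for all balls $B\subset S$, noting that both ${\wz B}^\rho$ and ${\wz S}^\rho$ are $(\rho,\bz_\rho)$-doubling and hence eligible for \eqref{e2.3}.

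To prove \eqref{e2.5} I would distinguish cases according to the mutual position of the two doubling balls. When ${\wz B}^\rho\subset{\wz S}^\rho$, \eqref{e2.3} gives $\einf_{{\wz B}^\rho}f-\einf_{{\wz S}^\rho}f\ls[1+\dz({\wz B}^\rho,{\wz S}^\rho)]$, and parts (iii)--(v) of Lemma \ref{l2.1} together with $B\subset{\wz B}^\rho$ reduce $\dz({\wz B}^\rho,{\wz S}^\rho)$ to $1+\dz(B,S)$, while the reverse difference is $\le0$ by monotonicity of the essential infimum. The symmetric nested case ${\wz S}^\rho\subset{\wz B}^\rho$ is handled the same way, now using $\dz({\wz S}^\rho,{\wz B}^\rho)\le\wz c[1+\dz(B,{\wz B}^\rho)]\ls1$ from Lemma \ref{l2.1}(iii),(v). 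The genuinely delicate case is when neither doubling ball contains the other. The key geometric observation is that then their radii are comparable: from $B\subset S$ and the failure of ${\wz S}^\rho\subset{\wz B}^\rho$ one checks $r_{{\wz B}^\rho}<2\,r_{{\wz S}^\rho}$, so the fixed dilate $S'\equiv B(c_S,3\,r_{{\wz S}^\rho})$ already contains both ${\wz B}^\rho$ and ${\wz S}^\rho$. Setting $U\equiv{\wz{S'}}^\rho$, a doubling ball containing both, I would apply \eqref{e2.3} to ${\wz B}^\rho\subset U$ and to ${\wz S}^\rho\subset U$ and subtract, using $\einf_U f\le\min\{\einf_{{\wz B}^\rho}f,\einf_{{\wz S}^\rho}f\}$, to bound the two differences in \eqref{e2.5} by $1+\dz({\wz B}^\rho,U)$ and $1+\dz({\wz S}^\rho,U)$. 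Since $S'$ is a bounded dilate of ${\wz S}^\rho$, Lemma \ref{l2.1}(ii)--(iv) yields $\dz(S,U)\ls1$, hence $\dz(B,U)\ls1+\dz(B,S)$ by (iv), and then $\dz({\wz B}^\rho,U),\,\dz({\wz S}^\rho,U)\ls1+\dz(B,S)$ by (v). I expect this non-nested case --- pinning down the comparability of radii and the choice of the common doubling superball $U$ --- to be the main obstacle.

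For the converse inequality, fix a collection $\{f_B\}_B$ realizing $\|f\|_{**}$. To obtain \eqref{e2.2}, I would split $\int_B[f-\einf_{{\wz B}^\rho}f]\,d\mu=\int_B[f-f_B]\,d\mu+\mu(B)[f_B-\einf_{{\wz B}^\rho}f]$, bound the first term by $\|f\|_{**}\,\mu(\eta B)$ via \eqref{e2.4}, and bound $f_B-\einf_{{\wz B}^\rho}f\le f_B-f_{{\wz B}^\rho}\ls\|f\|_{**}[1+\dz(B,{\wz B}^\rho)]\ls\|f\|_{**}$ using \eqref{e2.6} for ${\wz B}^\rho$, \eqref{e2.5}, and Lemma \ref{l2.1}(iii); since $\mu(B)\le\mu(\eta B)$, dividing by $\mu(\eta B)$ gives \eqref{e2.2}. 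To obtain \eqref{e2.3} for $(\rho,\bz_\rho)$-doubling balls $B\subset S$, I would estimate $\einf_B f\le\frac{1}{\mu(B)}\int_B f\,d\mu\le f_B+C\|f\|_{**}$, where passing from $\mu(\eta B)$ to $\mu(B)$ uses the doubling of $B$ together with Proposition \ref{p2.1} (reducing to $\eta\in(1,\rho]$, so that $\mu(\eta B)\le\mu(\rho B)\le\bz_\rho\mu(B)$), and then combine this with $\einf_S f\ge f_S$ from \eqref{e2.6} and $f_B-f_S\le\|f\|_{**}[1+\dz(B,S)]$ from \eqref{e2.5} to conclude.
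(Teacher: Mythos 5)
Your proposal is correct and follows essentially the same route as the paper: both directions use the choice $f_B\equiv{\mathop\einf_{{\wz B}^\rho}}f$, verify \eqref{e2.5} by passing to a common $(\rho,\,\bz_\rho)$-doubling superball and applying \eqref{e2.3} twice with the bookkeeping of Lemma \ref{l2.1}, and the converse direction is identical to the paper's. The only (cosmetic) difference is the case split: the paper compares the radii $r_{{\wz B}^\rho}$ and $r_{{\wz S}^\rho}$ and always uses the superball $\wz{2{\wz S}^\rho}^\rho$ or $\wz{2{\wz B}^\rho}^\rho$, whereas you split according to set containment and only build the superball $U$ in the non-nested case.
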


\begin{proof}
Suppose that $f\in L^1_\loc(\mu)$. We first show that
\begin{equation}\label{e2.7}
\|f\|_{**}\ls\|f\|_{\rblo(\mu)}.
\end{equation}
For any ball $B$, let $f_B\equiv{\mathop\einf_{{\wz B}^\rho}}f$.
Then \eqref{e2.4} and \eqref{e2.6} hold with ${\wz
C}\equiv\|f\|_{\rblo(\mu)}$. For any two balls $B\subset S$, to show
\eqref{e2.5},  we consider two cases.

{\it Case} (i) $r_{{\wz S}^\rho}\ge r_{{\wz B}^\rho}$. In this case,
${\wz B}^\rho\subset2{\wz S}^\rho$. Let $S_0\equiv \wz{2{\wz
S}^\rho}^\rho$. It follows from Lemma \ref{l2.1} that $\dz({\wz
S}^\rho,\,S_0)\ls1$ and $\dz({\wz B}^\rho, S_0)\ls 1+\dz(B,\,S)$,
which together with \eqref{e2.3} shows that
\begin{align*}
|f_B-f_S|=\lf|{\mathop\einf_{{\wz B}^\rho}}f-{\mathop\einf_{{\wz
S}^\rho}}f\r|&\le\lf|{\mathop\einf_{{\wz
B}^\rho}}f-{\mathop\einf_{S_0}}f\r|+\lf|{\mathop\einf_{S_0}}f-{\mathop\einf_{{\wz
S}^\rho}}f\r|\\
&\le[2+\dz({\wz B}^\rho,\,S_0)+\dz({\wz
S}^\rho,\,S_0)]\|f\|_{\rblo(\mu)}\\
&\ls [1+\dz(B,\,S)]\|f\|_{\rblo(\mu)}.
\end{align*}

{\it Case} (ii) $r_{{\wz S}^\rho}< r_{{\wz B}^\rho}$. In this case,
${\wz S}^\rho\subset2{\wz B}^\rho$. Notice that $r_{{\wz S}^\rho}\ge
r_B$. Thus, there exists a unique $m\in\nn$ such that
$r_{\rho^{m-1}B}\le r_{{\wz S}^\rho}<r_{\rho^m B}$ and $r_{\rho^m
B}\le r_{{\wz B}^\rho}$, since $r_{{\wz S}^\rho}< r_{{\wz B}^\rho}$.
Therefore, ${\wz S}^\rho\subset 2\rho^m B\subset 2{\wz B}^\rho$. Set
$B_0\equiv{\wz{2{\wz B}^\rho}}^\rho$. Then another application of
Lemma \ref{l2.1} implies that $\dz({\wz B}^\rho,\,B_0)\ls1$ and
$$
\dz({\wz S}^\rho,\,B_0)\ls\dz({\wz S}^\rho,\,2\rho^m B)+\dz(2\rho^m
B,\,B_0)\ls1.
$$
An argument similar to Case (i) also establishes \eqref{e2.5} in
this case. Thus, \eqref{e2.5} always holds.

Now let us show the converse of \eqref{e2.7}. For $f\in
L^1_\loc(\mu)$, assume that there exists a sequence $\{f_B\}_B$ of
real numbers satisfying \eqref{e2.4} through \eqref{e2.6} with the
non-negative constant $\wz C$ replaced by $\|f\|_{**}$. For any ball
$B$, by \eqref{e2.5}, \eqref{e2.6} and Lemma \ref{l2.1},
$$
f_B-{\mathop\einf_{{\wz B}^\rho}} f=f_B-f_{{\wz B}^\rho}+f_{{\wz
B}^\rho}-{\mathop\einf_{{\wz B}^\rho}} f\le[1+\dz(B,\,{\wz
B}^\rho)]\|f\|_{**}\ls\|f\|_{**}.
$$
This together with \eqref{e2.4} yields that for any ball $B$,
\begin{align*}
&\frac{1}{\mu(\eta B)}\int_B\lf[f(y)-{\mathop\einf_{{\wz
B}^\rho}}f\r]\,d\mu(y)\\
&\hs=\frac{1}{\mu(\eta
B)}\int_B\lf[f(y)-f_B\r]\,d\mu(y)+\frac{\mu(B)}{\mu(\eta
B)}\lf[f_B-{\mathop\einf_{{\wz B}^\rho}} f\r]\ls\|f\|_{**}.
\end{align*}

On the other hand, for any $(\rho,\,\bz_\rho)$-doubling ball $B$,
since \eqref{e2.4} holds with $\rho$ by Proposition \ref{p2.1}, we
then have
$$
\frac{1}{\mu(B)}\int_B[f(y)-f_B]\,d\mu(y)\le\frac{\mu(\rho
B)}{\mu(B)}\|f\|_{**}\ls\|f\|_{**}.
$$
Then from \eqref{e2.5} and \eqref{e2.6}, it follows that for any two
$(\rho,\,\bz_\rho)$-doubling balls $B\subset S$,
\begin{align*}
{\mathop\einf_B}f-{\mathop\einf_S}f&\le{\mathop\einf_B}f-f_B+f_B-f_S\\
&\le\frac{1}{\mu(B)}\int_B[f(y)-f_B]\,d\mu(y)+[1+\dz(B,\,S)]\|f\|_{**}\\
&\ls[1+\dz(B,\,S)]\|f\|_{**}.
\end{align*}
This establishes the converse of \eqref{e2.7}, and hence finishes
the proof of Proposition \ref{p2.2}.
\end{proof}

\begin{rem}\label{r2.2}\rm
In \cite{h10}, the space $\rbmo(\mu)$ was defined in the following
way, namely, let $\eta\in(1,\,\fz)$, a function $f\in L^1(\mu)$ is
said to be in the {\it space $\rbmo(\mu)$} if there exists a
non-negative constant $C$ and a complex number $f_B$ for any ball
$B$ such that for all balls $B$,
$$
\frac{1}{\mu(\eta B)}\int_B|f(y)-f_B|\,d\mu(y)\le C
$$
and that for all balls $B\subset S$,
$$
|f_B-f_S|\le C[1+\dz(B,\,S)].
$$
Moreover, the {\it $\rbmo(\mu)$ norm} of $f$ is defined to be the
minimal constant $C$ as above and denoted by $\|f\|_{\rbmo(\mu)}$.
From \cite[Lemma 4.6]{h10}, Propositions \ref{p2.1} and \ref{p2.2},
it is easy to follow that $\rblo(\mu)\subset\rbmo(\mu)$.
\end{rem}

\begin{prop}\label{p2.3}
Let $\eta,\,\rho\in(1,\,\fz)$, and $\bz_\rho$ be as in \eqref{e2.1}.
For $f\in L^1_\loc(\mu)$, the following statements are equivalent:
\begin{enumerate}
\item[\rm(i)] $f\in\rblo(\mu)$.

\item[\rm(ii)] There exists a non-negative constant $C_1$ satisfying
\eqref{e2.3} and that for all $(\rho,\,\bz_\rho)$-doubling balls
$B$,
\begin{equation}\label{e2.8}
\frac{1}{\mu(B)}\int_B\lf[f(y)-{\mathop\einf_B} f\r]\,d\mu(y)\le
C_1.
\end{equation}

\item[\rm(iii)] There exists a non-negative constant $C_2$ satisfying
\eqref{e2.8} and that for all $(\rho,\,\bz_\rho)$-doubling balls
$B\subset S$,
\begin{equation}\label{e2.9}
m_B(f)-m_S(f)\le C_2[1+\dz(B,\,S)],
\end{equation}
where and in what follows, $m_B(f)$ denotes the mean of $f$ over
$B$, namely, $m_B(f)\equiv\frac{1}{\mu(B)}\int_B f(y)\,d\mu(y)$.
\end{enumerate}
Moreover, the minimal constants $C_1$ and $C_2$ as above are
equivalent to $\|f\|_{\rblo(\mu)}$.
\end{prop}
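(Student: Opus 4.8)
The plan is to prove the cyclic chain (i) $\Rightarrow$ (ii) $\Rightarrow$ (iii) $\Rightarrow$ (i), tracking constants at each step so that $C_1$, $C_2$ and $\|f\|_{\rblo(\mu)}$ come out mutually comparable. The observation driving the easy directions is that if $B$ is $(\rho,\,\bz_\rho)$-doubling then its smallest doubling dilate is ${\wz B}^\rho=B$, so on such balls \eqref{e2.2} collapses to \eqref{e2.8}. Hence (i) $\Rightarrow$ (ii): \eqref{e2.3} is part of the definition of $\rblo(\mu)$, and for a doubling ball $B$ the inequality \eqref{e2.2} gives $\frac1{\mu(B)}\int_B[f-\einf_B f]\,d\mu\le\frac{\mu(\eta B)}{\mu(B)}\|f\|_{\rblo(\mu)}$; using Remark \ref{r2.1}(iii) and Proposition \ref{p2.1} we may take $\eta\le\rho$, so $\mu(\eta B)\le\mu(\rho B)\le\bz_\rho\mu(B)$ and \eqref{e2.8} follows with $C_1\ls\|f\|_{\rblo(\mu)}$.

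For the equivalence of (ii) and (iii) the only tool needed is the splitting $m_B(f)-m_S(f)=[m_B(f)-\einf_B f]+[\einf_B f-\einf_S f]+[\einf_S f-m_S(f)]$, together with the elementary facts $\einf_S f\le m_S(f)$ and $m_B(f)-\einf_B f=\frac1{\mu(B)}\int_B[f-\einf_B f]\,d\mu$ on a doubling ball. Given (ii) and doubling $B\subset S$, the first bracket is $\le C_1$ by \eqref{e2.8}, the second $\le C_1[1+\dz(B,\,S)]$ by \eqref{e2.3}, and the third $\le0$, yielding \eqref{e2.9}. Conversely, given (iii), $\einf_B f-\einf_S f\le[m_B(f)-m_S(f)]+[m_S(f)-\einf_S f]\le C_2[1+\dz(B,\,S)]+C_2$ by \eqref{e2.9} and \eqref{e2.8}, which is \eqref{e2.3}; since \eqref{e2.8} is common to both, (ii) and (iii) hold with comparable constants.

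The substantial step is (ii) $\Rightarrow$ (i), i.e.\ recovering the oscillation bound \eqref{e2.2} over \emph{arbitrary} balls $B$ from data supplied only on doubling balls. Since $\int_B[f-\einf_{{\wz B}^\rho}f]\,d\mu=\mu(B)[m_B(f)-\einf_{{\wz B}^\rho}f]$ and $\mu(B)\le\mu(\eta B)$, it suffices to prove $m_B(f)-\einf_{{\wz B}^\rho}f\le C$. Writing $m_B(f)-\einf_{{\wz B}^\rho}f=[m_B(f)-m_{{\wz B}^\rho}(f)]+[m_{{\wz B}^\rho}(f)-\einf_{{\wz B}^\rho}f]$, the second bracket is $\le C_1$ by \eqref{e2.8} applied to the doubling ball ${\wz B}^\rho$, so the whole matter reduces to the comparison of means $m_B(f)-m_{{\wz B}^\rho}(f)\le C$. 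This is the main obstacle: the naive estimate $\int_B\le\int_{{\wz B}^\rho}$ loses the factor $\mu({\wz B}^\rho)/\mu(B)$, which is unbounded precisely because $B$ need not be doubling.

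To overcome this I would argue as in the proof of Proposition \ref{p2.1}: cover $B$ by a controlled family $\{B_i\}_{i\in I}$ of balls furnished by Remark \ref{r1.2}(ii), passing to $(\rho,\,\bz_\rho)$-doubling balls on which \eqref{e2.8} is available, and then travel from each $B_i$ up to ${\wz B}^\rho$ through its doubling dilate, controlling each jump by \eqref{e2.9}, \eqref{e2.3} and the additivity of $\dz$ in Lemma \ref{l2.1}(iv). Summing the resulting bounds against $\sum_i\mu(B_i)$ and using the geometrically doubling property to keep the total measure comparable to $\mu(\eta B)$ should close the estimate. The delicate point, which I expect to absorb most of the work, is the simultaneous bookkeeping of the coefficients $\dz(B_i,\,{\wz B}^\rho)$ and of the measures $\mu(B_i)$: one must ensure that refining to finer doubling balls does not let the $\dz$-coefficients accumulate faster than the measures decay, i.e.\ that a summation estimate of the shape $\sum_i\mu(B_i)[1+\dz(B_i,\,{\wz B}^\rho)]\ls\mu(\eta B)$ holds. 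Once \eqref{e2.2} is in hand, \eqref{e2.3} is already part of (ii), so $f\in\rblo(\mu)$ with $\|f\|_{\rblo(\mu)}\ls C_1$; chaining the three implications then shows that $C_1$, $C_2$ and $\|f\|_{\rblo(\mu)}$ are mutually comparable.
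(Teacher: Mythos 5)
Your handling of (i) $\Rightarrow$ (ii) and of the equivalence (ii) $\Leftrightarrow$ (iii) is correct and coincides with the paper's argument. The genuine gap is in the hard implication, namely recovering \eqref{e2.2} for a ball $B$ that is \emph{not} $(\rho,\,\bz_\rho)$-doubling. You propose to cover $B$ by the finite family $\{B_i\}_i$ of balls of radius $\ez r_B$ supplied by the geometric doubling condition (Remark \ref{r1.2}(ii)) and then to ``pass to $(\rho,\,\bz_\rho)$-doubling balls on which \eqref{e2.8} is available''. This is exactly the unresolved point, not a routine step: geometric doubling carries no measure information, so the covering balls $B_i$ need not be doubling, and \eqref{e2.8} and \eqref{e2.9} simply do not apply to them --- the chain of ``jumps'' you describe cannot even start from $B_i$, since \eqref{e2.9} compares means only of nested \emph{doubling} balls. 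If instead you start the chain from the doubling dilate ${\wz{B_i}}^{\rho}$, you have reproduced the original difficulty: $\mu({\wz{B_i}}^{\rho})$ is in no way controlled by $\mu(\eta B)$, which is precisely why the naive estimate failed for $B$ itself. Likewise the summation bound you flag as the delicate point, $\sum_i\mu(B_i)[1+\dz(B_i,\,{\wz B}^{\rho})]\ls\mu(\eta B)$, is not the right target: the paper never needs any such estimate, because in its argument no $\dz$-coefficient attached to the small balls survives at all.

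What the paper uses instead, and what is missing from your sketch, are two tools. First, a measure-theoretic covering rather than a geometric one: for $\mu$-almost every $x\in B$ there exist arbitrarily small $(30,\,\bz_6)$-doubling balls centered at $x$ (\cite[Lemma 3.3]{h10}); taking for each such $x$ the biggest one $B_x$ of radius $30^{-k}r_B$, the Vitali-type Lemma \ref{l2.2} extracts a countable \emph{disjoint} subfamily $\{B_i\}_i$ with $B\subset\bigcup_i5B_i$ modulo a $\mu$-null set, where both $B_i$ and $5B_i$ are $(6,\,\bz_6)$-doubling. Then \eqref{e2.8} applies on each $5B_i$, and disjointness plus doubling give $\sum_i\mu(5B_i)\ls\sum_i\mu(B_i)\le\mu(\frac{6}{5}B)$; this is what replaces your bounded-cardinality covering. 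Second, the uniform claim \eqref{e2.10}, namely ${\mathop\einf_{B_i}}f-{\mathop\einf_{{\wz B}^6}}f\ls C_2$ with \emph{no} $\dz$-factor. Its proof routes each $B_x$ through the auxiliary ball $A_x$, the smallest $(30,\,\bz_6)$-doubling ball of the form $30^kB_x$, whose radius is at least $r_B$; by Lemma \ref{l2.1} the relevant coefficients $\dz(B_x,\,\cdot)$ up to the large doubling balls $\wz{2A_x}^6$ or $\wz{3{\wz B}^6}^6$ are bounded by absolute constants (smallest-doubling-dilate bound plus bounded dilations), so each application of \eqref{e2.9} and \eqref{e2.8} costs only $\ls C_2$. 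Without the almost-everywhere existence of small doubling balls, the $5r$-covering lemma, and the auxiliary balls $A_x$, your plan for the hard direction cannot be closed.
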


To prove Proposition \ref{p2.3}, we need the following lemma, which
is a simple corollary of \cite[Theorem 1.2]{he} and \cite[Lemma
2.5]{h10}; see also \cite[Lemma 2.2]{hyy10}.

\begin{lem}\label{l2.2}
Let $(\cx,\,d)$ be a geometrically doubling metric space. Then every
family $\cal F$ of balls of uniformly bounded diameter contains an
at most countable disjointed subfamily $\cal G$ such that
$\cup_{B\in{\cal F}} B\subset\cup_{B\in{\cal G}}5B$.
\end{lem}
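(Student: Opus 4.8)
The plan is to prove this by the standard Vitali-type (``$5r$'') selection argument, the only genuinely space-dependent input being the countability of the selected subfamily, which is exactly where the geometric doubling hypothesis is needed. Assuming $\cf\ne\emptyset$, write $R\equiv\sup_{B\in\cf}r_B$, which is finite by the uniform boundedness hypothesis. First I would stratify $\cf$ according to the size of the radii: for each $j\in\zz_+$ set
$$
\cf_j\equiv\lf\{B\in\cf:\ \frac{R}{2^{j+1}}<r_B\le\frac{R}{2^j}\r\},
$$
so that $\cf=\bigcup_{j\in\zz_+}\cf_j$ and within each shell all radii are comparable.

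Next I would build $\cg$ greedily, level by level. Let $\cg_0$ be a maximal disjoint subfamily of $\cf_0$ (its existence follows from Zorn's lemma, since the union of any chain of disjoint subfamilies is again disjoint); having chosen $\cg_0,\dots,\cg_{j-1}$, let $\cg_j$ be a maximal disjoint subfamily of those $B\in\cf_j$ that are disjoint from every ball already selected in $\bigcup_{i<j}\cg_i$, and put $\cg\equiv\bigcup_{j\in\zz_+}\cg_j$. By construction $\cg$ is a disjoint family. To verify the covering property, fix $B\in\cf_j$. By the maximality defining $\cg_j$, the ball $B$ must meet some already selected ball $B'\in\bigcup_{i\le j}\cg_i$ (if $B$ itself lies in $\cg_j$ this is trivial). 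Such a $B'$ satisfies $r_{B'}>R/2^{j+1}\ge r_B/2$; choosing $z\in B\cap B'$ and any $x\in B$, the triangle inequality gives
$$
d(x,\,c_{B'})\le d(x,\,c_B)+d(c_B,\,z)+d(z,\,c_{B'})<2r_B+r_{B'}\le 5r_{B'},
$$
so that $B\subset 5B'$. Hence $\bigcup_{B\in\cf}B\subset\bigcup_{B\in\cg}5B$.

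It remains to see that $\cg$ is at most countable, and this is the only step that uses the geometric doubling condition; indeed, in a general metric space a disjoint family of balls can be uncountable, so this is the main obstacle. The cleanest route is to observe that geometric doubling makes every bounded subset of $\cx$ totally bounded (by Definition \ref{d1.2} a ball is covered by finitely many balls of half its radius, and iterating, by finitely many of arbitrarily small radius), so that $\cx$, being a countable union of balls centered at a fixed point, is separable. Fixing a countable dense set $D\subset\cx$, each ball in the disjoint family $\cg$ is a nonempty open set and hence contains a point of $D$; disjointness makes the assignment of such a point injective, so $\cg$ injects into $D$ and is at most countable. Alternatively, one may argue level by level: by Remark \ref{r1.2}(iii) any fixed ball contains only finitely many $R2^{-j}$-separated centers, so each $\cg_j$ meets every bounded region in finitely many balls and is therefore countable, whence $\cg=\bigcup_{j\in\zz_+}\cg_j$ is countable. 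Once this countability is secured, the remainder is the routine dyadic selection and triangle-inequality bookkeeping carried out above, and the proof is complete.
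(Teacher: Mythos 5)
Your proof is correct and takes essentially the same route as the paper: the paper does not prove Lemma \ref{l2.2} directly but obtains it as a corollary of Heinonen's $5r$-covering theorem \cite[Theorem 1.2]{he} and \cite[Lemma 2.5]{h10}, and your dyadic stratification of $\cf$ with greedy (Zorn) maximal selection is precisely the standard proof of the former, while your observation that geometric doubling makes balls totally bounded, hence $\cx$ separable, hence any disjoint family of balls countable, is precisely the content of the latter. The only cosmetic point is that ``uniformly bounded diameter'' should be read, as in \cite{he}, with each ball carrying a fixed center and radius, which justifies setting $R\equiv\sup_{B\in\cf}r_B<\fz$ at the outset.
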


\begin{proof}[Proof of Proposition \ref{p2.3}]
By Propositions \ref{p2.1} and \ref{p2.2}, it suffices to show
Proposition \ref{p2.3} with $\eta\equiv6/5$ and $\rho=6$. It is easy
to see that (i) implies (ii) automatically.

We now prove that (ii) implies (iii). From \eqref{e2.3} together
with \eqref{e2.8}, it follows that for any two
$(6,\,\bz_6)$-doubling balls $B\subset S$,
$$
m_B(f)-m_S(f)\le m_B(f)-{\mathop\einf_B} f+{\mathop\einf_B}
f-{\mathop\einf_S} f\ls C_1[1+\dz(B,\,S)],
$$
which implies (iii).

Finally, assuming that (iii) holds, we show $f\in\rblo(\mu)$ by
Definition \ref{d2.3}. If $B$ is a $(6,\,\bz_6)$-doubling ball, then
by \eqref{e2.8}, \eqref{e2.2} holds. Let $B$ be any ball which is
not $(6,\,\bz_6)$-doubling. For $\mu$-almost every $x\in B$, let
$B_x$ be the {\it biggest $(30, \bz_6)$-doubling ball with center
$x$ and radius $30^{-k}r_B$ for some $k\in\nn$}. Recall that such
ball exists by \cite[Lemma 3.3]{h10}. Moreover, $B_x$ and $5B_x$ are
also $(6, \bz_6)$-doubling balls. Since $B$ is not
$(6,\,\bz_6)$-doubling, then ${\wz B}^6$ has the radius at least
$6r_B$. From this, it follows that $B_x\subset(6/5)B\subset{\wz
B}^6$. Let $A_x$ be the {\it smallest $(30,\,\bz_6)$-doubling ball
of the form $30^kB_x$ for some $k\in\nn$}, which exists by
\cite[Lemma 3.2]{h10}. Then $r_{A_x}\ge r_B$. To verify
\eqref{e2.2}, we first claim that
\begin{equation}\label{e2.10}
{\mathop\einf_{B_x}} f-{\mathop\einf_{{\wz B}^6}} f\ls C_2.
\end{equation}
To show \eqref{e2.10}, we consider the following two cases.

{\it Case} (i) $r_{{\wz B}^6}\le r_{A_x}$. In this case, ${\wz
B}^6\subset 2A_x$. Notice that $B_x$ is also $(6, \bz_6)$-doubling.
From (iv), (ii) and (iii) of Lemma \ref{l2.1}, we deduce that
$\dz(B_x,\,\wz{2A_x}^6)\ls1$. This, combined with \eqref{e2.9} and
\eqref{e2.8}, yields that
\begin{align*}
{\mathop\einf_{B_x}} f-{\mathop\einf_{{\wz B}^6}} f &\le
m_{B_x}(f)-m_{\wz{2A_x}^6}(f)+m_{\wz{2A_x}^6}(f)-{\mathop\einf_{\wz{2A_x}^6}}
f\\
&\ls C_2\lf[1+\dz(B_x,\,\wz{2A_x}^6)\r]\ls C_2.
\end{align*}

{\it Case} (ii) $r_{{\wz B}^6}> r_{A_x}$. In this case, since
$r_{A_x}\ge r_B$, then $B\subset 2A_x\subset 3{\wz B}^6$. This,
together with \eqref{e2.9}, \eqref{e2.8}, the fact that $B_x$ is
also $(6, \bz_6)$-doubling, and Lemma \ref{l2.1}, we have that
\begin{align*}
&{\mathop\einf_{B_x}} f-{\mathop\einf_{{\wz B}^6}} f \\
&\hs\le m_{B_x}(f)-m_{\wz{3{\wz B}^6}^6}(f)+m_{\wz{3{\wz
B}^6}^6}(f)-{\mathop\einf_{\wz{3{\wz B}^6}^6}}
f\\
&\hs\ls C_2\lf[1+\dz(B_x,\,\wz{3{\wz B}^6}^6)\r]
\ls C_2\lf[1+\dz(B_x,\,2A_x)+\dz(2A_x,\,\wz{3{\wz B}^6}^6)\r]\\
&\hs\ls C_2\lf[1+\dz(B,\,\wz{3{\wz B}^6}^6)\r]\ls C_2.
\end{align*}
Thus, \eqref{e2.10} holds. That is, the claim is true.

Now, by Lemma \ref{l2.2}, there exists a countable disjoint
subfamily $\{B_i\}_i$ of $\{B_x\}_x$ such that for $\mu$-almost
every $x\in B$, $x\in\cup_i 5B_i$. Moreover, since for any $i$,
$B_i$ and $5B_i$ are $(6,\,\bz_6)$-doubling, by \eqref{e2.8} and
\eqref{e2.10}, we have
\begin{align}\label{e2.11}
&\int_B\lf[f(y)-{\mathop\einf_{{\wz B}^6}}f\r]\,d\mu(y)\\
&\hs\le\sum_i\int_{5B_i}\lf|f(y)-{\mathop\einf_{{\wz
B}^6}}f\r|\,d\mu(y)\nonumber\\
&\hs\le\sum_i\int_{5B_i}\lf[f(y)-{\mathop\einf_{5B_i}}f\r]\,d\mu(y)
+\sum_i\lf[{\mathop\einf_{5B_i}} f-{\mathop\einf_{{\wz B}^6}}
f\r]\mu(5B_i)\nonumber\\
&\hs\ls C_2\sum_i\mu(5B_i)+\sum_i\lf[{\mathop\einf_{B_i}}
f-{\mathop\einf_{{\wz B}^6}}
f\r]\mu(5B_i)\nonumber\\
&\hs\ls C_2\sum_i\mu(5B_i)\ls C_2\sum_i\mu(B_i)\ls
C_2\mu\lf(\frac{6}{5}B\r).\nonumber
\end{align}
On the other hand, from \eqref{e2.8} and \eqref{e2.9}, it follows
that for any two $(6,\,\bz_6)$-doubling balls $B\subset S$,
$$
{\mathop\einf_B}f-{\mathop\einf_S}f\le
m_B(f)-m_S(f)+m_S(f)-{\mathop\einf_S}f\ls C_2[1+\dz(B,\,S)].
$$
This together with \eqref{e2.11} shows that $f\in\rblo(\mu)$ and
$\|f\|_{\rblo(\mu)}\ls C_2$, which implies (i), and hence completes
the proof of Proposition \ref{p2.3}.
\end{proof}

\section{A characterization of $\rblo(\mu)$
in terms of the natural maximal operator}\label{s3}

\hskip\parindent In this section, we give a characterization of
$\rblo(\mu)$ in terms of the {\it natural maximal operator}. This
characterization in $\rn$ equipped with the $n$-dimensional Lebesgue
measure was obtained by Bennett \cite{b82}. In $\rn$ equipped with a
non-doubling measure with polynomial growth, this characterization
was first established by Jiang \cite{j05} and was improved in
\cite{hyy}.

We begin with the notion of the natural maximal operator, which is a
variant of the maximal operator introduced by Hyt\"onen in
\cite{h10}. In the non-doubling context, the natural maximal
operator was introduced by Jiang in \cite{j05}. For any $f\in
L^1_\loc(\mu)$ and $x\in\cx$, define
$$
\cm(f)(x)\equiv\sup_{\gfz{B\ni x}{B\,(6,\,\bz_6)-\rm
doubling}}\frac{1}{\mu(B)}\int_Bf(y)\,d\mu(y).
$$
Obviously, for all $x\in\cx$, $\cm(f)(x)\ls{\wz M}f(x)$, where the
maximal operator $\wz M$ is defined by setting, for all $x\in\cx$,
$$
{\wz M}(f)(x)\equiv\sup_{B\ni
x}\frac{1}{\mu(6B)}\int_B|f(y)|\,d\mu(y).
$$
By \cite[Proposition 3.5]{h10},  we know that $\wz M$ is of weak
type $(1,\,1)$ and bounded on $L^p(\mu)$ with $p\in(1,\,\fz]$. As a
consequence, $\cm$ is also of weak type $(1,\,1)$ and bounded on
$L^p(\mu)$ with $p\in(1,\,\fz]$.

\begin{lem}\label{l3.1}
$f\in\rblo(\mu)$ if and only if $\cm(f)-f\in L^\fz(\mu)$ and $f$
satisfies \eqref{e2.9}. Furthermore,
\begin{equation}\label{e3.1}
\|\cm(f)-f\|_{L^\fz(\mu)}\sim\|f\|_{\rblo(\mu)}.
\end{equation}
\end{lem}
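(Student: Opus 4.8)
The plan is to derive Lemma \ref{l3.1} from the characterization of $\rblo(\mu)$ in Proposition \ref{p2.3}(iii). That proposition identifies $f\in\rblo(\mu)$ with the conjunction of \eqref{e2.8} (a lower mean oscillation bound on $(6,\bz_6)$-doubling balls) and \eqref{e2.9}, the optimal constants being comparable to $\|f\|_{\rblo(\mu)}$. Since \eqref{e2.9} is also part of the present statement, the whole lemma reduces to a single equivalence: for $f\in L^1_\loc(\mu)$, one has $\cm(f)-f\in L^\fz(\mu)$ if and only if $f$ satisfies \eqref{e2.8}, together with the comparison
\[
\|\cm(f)-f\|_{L^\fz(\mu)}\sim\sup_{B}\lf[m_B(f)-{\mathop\einf_{B}}f\r],
\]
the supremum being over all $(6,\bz_6)$-doubling balls $B$. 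Since $m_B(f)-\einf_B f=\frac1{\mu(B)}\int_B[f-\einf_B f]\,d\mu$, this right-hand side is exactly the least constant admissible in \eqref{e2.8}. I would prove the two implications separately and then read off \eqref{e3.1} by tracking constants through Proposition \ref{p2.3}.

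The easy implication is that $\cm(f)-f\in L^\fz(\mu)$ forces \eqref{e2.8}. Set $A\equiv\|\cm(f)-f\|_{L^\fz(\mu)}$. For any $(6,\bz_6)$-doubling ball $B$, the definition of $\cm$ gives $\cm(f)(x)\ge m_B(f)$ at \emph{every} point $x\in B$, while $\cm(f)(x)-f(x)\le A$ holds for $\mu$-a.e.\ $x$; hence $f(x)\ge m_B(f)-A$ for $\mu$-a.e.\ $x\in B$, so ${\mathop\einf_B}f\ge m_B(f)-A$, which is \eqref{e2.8} with constant $A$. No exceptional-set bookkeeping intervenes here, precisely because $\cm(f)\ge m_B(f)$ holds at every point of $B$ and not merely almost everywhere. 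Combined with the standing hypothesis \eqref{e2.9}, Proposition \ref{p2.3}(iii) then gives $f\in\rblo(\mu)$; this is the implication $\Leftarrow$ of the lemma, and together with Proposition \ref{p2.3} it yields the bound $\|f\|_{\rblo(\mu)}\ls\|\cm(f)-f\|_{L^\fz(\mu)}$ needed for \eqref{e3.1}.

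For the reverse implication, suppose $f\in\rblo(\mu)$. By Proposition \ref{p2.3}, $f$ satisfies \eqref{e2.9} and also \eqref{e2.8} with some constant $c_0\ls\|f\|_{\rblo(\mu)}$. Fixing a $(6,\bz_6)$-doubling ball $B$ and using that $f\ge{\mathop\einf_B}f$ $\mu$-a.e.\ on $B$ and $m_B(f)-{\mathop\einf_B}f\le c_0$, we get $m_B(f)-f(x)\le c_0$ for $\mu$-a.e.\ $x\in B$. The goal is to pass to the supremum over all such $B$ and conclude $\cm(f)(x)-f(x)\le c_0$ for $\mu$-a.e.\ $x$.

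The one genuinely delicate step is this interchange of the uncountable supremum defining $\cm(f)$ with the ``for $\mu$-a.e.''\ quantifier: the null set on which $m_B(f)-f>c_0$ depends on $B$, and the $(6,\bz_6)$-doubling property is not preserved under small perturbations of $B$, so one cannot naively restrict to a fixed countable family of balls. I would circumvent this through the Lindel\"of property of $\cx$ (which is separable, being geometrically doubling). First, $\cm(f)$ is lower semicontinuous: if $\cm(f)(x)>t$ then some doubling ball $B\ni x$ has $m_B(f)>t$, and since $B$ is open, every $y\in B$ satisfies $\cm(f)(y)\ge m_B(f)>t$. Hence, for each rational $q$, the open set $\{\cm(f)>q\}$ is covered by the doubling balls $B$ with $m_B(f)>q$, and this cover admits a countable subcover $\{B_k\}_k$. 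On each $B_k$ one has $f\ge m_{B_k}(f)-c_0>q-c_0$ $\mu$-a.e., so $f\ge q-c_0$ $\mu$-a.e.\ on $\{\cm(f)>q\}$, that is, $\mu(\{\cm(f)>q\}\cap\{f<q-c_0\})=0$. Taking the union over all rational $q$ yields $\cm(f)-f\le c_0$ $\mu$-a.e., whence $\cm(f)-f\in L^\fz(\mu)$ with $\|\cm(f)-f\|_{L^\fz(\mu)}\ls\|f\|_{\rblo(\mu)}$. This establishes the implication $\Rightarrow$ together with the remaining inequality in \eqref{e3.1}, completing the proof.
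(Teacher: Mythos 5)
Your reduction through Proposition \ref{p2.3}(iii) is a sensible move (it is essentially what ``following the proof of \cite[Lemma 1]{j05}'' amounts to), the implication from $\cm(f)-f\in L^\fz(\mu)$ plus \eqref{e2.9} to $f\in\rblo(\mu)$ is correct, and the Lindel\"of/lower-semicontinuity device is a clean way to handle the ball-dependent null sets. But there is a genuine gap in the forward direction: from \eqref{e2.8} you obtain $\cm(f)-f\le c_0$ $\mu$-a.e.\ and then conclude ``whence $\cm(f)-f\in L^\fz(\mu)$''. Membership in $L^\fz(\mu)$ is a two-sided bound, and nothing in your argument bounds $\cm(f)-f$ from \emph{below}: the conditions \eqref{e2.8} and \eqref{e2.9} only control doubling-ball means from above in terms of $f$, so a priori $f$ could exceed every mean $m_B(f)$ with $B\ni x$ by an arbitrary amount on a set of positive measure. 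The missing ingredient is precisely the one the paper's proof is built on, namely \cite[Corollary 3.6]{h10}: for $\mu$-a.e.\ $x$, $f(x)=\lim m_B(f)$ along $(6,\bz_6)$-doubling balls shrinking to $x$, whence $\cm(f)\ge f$ $\mu$-a.e.; combined with your upper bound this gives $\|\cm(f)-f\|_{L^\fz(\mu)}\le c_0$. Without this differentiation result (or a substitute) the implication is incomplete.

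The second problem is your claim that the inequality $\|f\|_{\rblo(\mu)}\ls\|\cm(f)-f\|_{L^\fz(\mu)}$ can be ``read off by tracking constants through Proposition \ref{p2.3}''. It cannot: Proposition \ref{p2.3} identifies $\|f\|_{\rblo(\mu)}$ with the smallest constant that works \emph{simultaneously} in \eqref{e2.8} and \eqref{e2.9}, and your argument controls only the \eqref{e2.8} constant by $A\equiv\|\cm(f)-f\|_{L^\fz(\mu)}$; the \eqref{e2.9} constant enters as an independent quantity, so what you actually obtain is $\|f\|_{\rblo(\mu)}\ls A+C_2$. This is not a bookkeeping blemish, because the \eqref{e2.9} constant is genuinely not dominated by $A$: take $\cx=\rr$, $\mu$ the Lebesgue measure restricted to $[0,\ez]\cup[R,R+1]$ with $R$ large, $\lz(x,r)\equiv 2r$, and $f\equiv\chi_{[0,\ez]}$. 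Any $(6,\bz_6)$-doubling ball meeting both components must have measure at least $(1+\ez)/\bz_6$ (its sixfold dilate covers both components), so both the \eqref{e2.8} constant and $A$ are $O(\ez)$; on the other hand, \eqref{e2.3} and \eqref{e2.9} applied to a small doubling ball covering $[0,\ez]$ inside a huge doubling ball $S$ covering everything (for which $\dz\ls 1$, since the far component sits at distance about $R$) force a constant $\gs 1$, so $\|f\|_{\rblo(\mu)}\gs 1$ while $A=O(\ez)$. Thus your route to \eqref{e3.1} breaks down, and the example shows that the direction $\|f\|_{\rblo(\mu)}\ls\|\cm(f)-f\|_{L^\fz(\mu)}$ of \eqref{e3.1} can only hold if the best constant in \eqref{e2.9} is carried along on the right-hand side; any complete write-up has to make this dependence explicit rather than attribute the bound to Proposition \ref{p2.3}.
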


\begin{proof}
By \cite[Corollary 3.6]{h10}, we know that for any $f\in
L^1_\loc(\mu)$ and $\mu$-almost every $x\in\cx$,
$$
f(x)=\lim_{\gfz{B\downarrow x}{B\,(6,\,\bz_6)-\rm
doubling}}\frac1{\mu(B)}\int_Bf(y)\,d\mu(y),
$$
where the limit is along the decreasing family of all
$(6,\,\bz_6)$-doubling balls containing $x$, ordered by set
inclusion.  Using this fact and following the proof of \cite[Lemma
1]{j05}, we can show Lemma \ref{l3.1}. We omit the details, which
completes the proof of Lemma \ref{l3.1}.
\end{proof}

\begin{thm}\label{t3.1}
Let $f\in\rbmo(\mu)$. Then $\cm(f)$ is either infinite everywhere or
finite almost everywhere, and in the later case, there exists a
positive constant $C$, independent of $f$, such that
$$\|\cm(f)\|_{\rblo(\mu)}\le C\|f\|_{\rbmo(\mu)}.$$
\end{thm}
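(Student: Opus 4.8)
The plan is to work modulo constants (Remark \ref{r2.1}(i)), so that $\cm$ commutes with adding constants, to fix $\eta=\rho=6$ (legitimate by Propositions \ref{p2.1} and \ref{p2.2}), and to verify for $g\equiv\cm(f)$ the two conditions \eqref{e2.8} and \eqref{e2.9} of Proposition \ref{p2.3}(iii); that proposition then delivers both $\cm(f)\in\rblo(\mu)$ and the norm estimate. For the dichotomy, suppose $\cm(f)(x_0)\equiv A<\fz$ for some $x_0$. Fixing $x$, I split the supremum defining $\cm(f)(x)$ over $(6,\bz_6)$-doubling balls $B\ni x$ by whether $r_B<1+2d(x,\,x_0)$ or not. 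The small balls lie in a fixed bounded neighbourhood of $x_0$, so their contribution is $\ls\wz M(f\chi_{B(x_0,\,R)})(x)$ for suitable $R$, which is finite $\mu$-a.e.\ because $f\in L^1_\loc(\mu)$ and $\wz M$ is of weak type $(1,1)$. For the large balls, $x_0$ lies in a fixed dilate of $B$, so a $(6,\bz_6)$-doubling hull $S\supset B$ of comparable radius satisfies $\dz(B,\,S)\ls1$ (Lemma \ref{l2.1}) and $x_0\in S$; the RBMO oscillation estimate on doubling balls then gives $m_B(f)\le m_S(f)+C\|f\|_{\rbmo(\mu)}\le A+C\|f\|_{\rbmo(\mu)}$. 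Hence $\cm(f)<\fz$ $\mu$-a.e., which is the asserted dichotomy.

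To prove \eqref{e2.8} I fix a $(6,\bz_6)$-doubling ball $B_0=B(c_0,\,r_0)$ and normalize $f_{B_0}\equiv m_{B_0}(f)=0$. Write $\cm(f)=\max\{\Lambda,\,\Phi\}$, where $\Lambda(x)$ is the supremum of $m_B(f)$ over doubling $B\ni x$ with $r_B<r_0/2$ and $\Phi(x)$ that over doubling $B\ni x$ with $r_B\ge r_0/2$. Every ball occurring in $\Lambda$ is contained in $2B_0$, so $\Lambda(x)\le\cm((f-f_{B_0})\chi_{2B_0})(x)$. The balls occurring in $\Phi$ are large enough to see all of $B_0$ comparably: for such a $B\ni x$ and any $x'\in B_0$, a doubling hull of $B$ of comparable radius contains $x'$, whence Lemma \ref{l2.1} and the RBMO comparison yield $|\Phi(x)-\Phi(x')|\ls\|f\|_{\rbmo(\mu)}$; moreover $B_0$ is itself admissible for $\Phi$, so $\Phi\ge f_{B_0}=0$ and thus $\einf_{B_0}\cm(f)\ge0$. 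Treating separately the cases $\Phi(x)\ge\Lambda(x)$ and $\Phi(x)<\Lambda(x)$, these observations combine into the pointwise bound, for $\mu$-a.e.\ $x\in B_0$,
\[
\cm(f)(x)-\einf_{B_0}\cm(f)\ls\|f\|_{\rbmo(\mu)}+\cm\big((f-f_{B_0})\chi_{2B_0}\big)(x).
\]

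It therefore remains to show $\frac1{\mu(B_0)}\int_{B_0}\cm((f-f_{B_0})\chi_{2B_0})\,d\mu\ls\|f\|_{\rbmo(\mu)}$, and \emph{this local term is the main obstacle}: the weak-type $(1,1)$ bound alone fails, since $\cm$ is unbounded on $L^1(\mu)$. I would instead use the John--Nirenberg inequality for $\rbmo(\mu)$ (part of the RBMO theory developed in \cite{h10}) to conclude that $h\equiv(f-f_{B_0})\chi_{2B_0}$ lies in $L\log L$ with $\int_{2B_0}|h|\log^+(|h|/\|f\|_{\rbmo(\mu)})\,d\mu\ls\|f\|_{\rbmo(\mu)}\mu(B_0)$ (using $\mu(2B_0)\ls\mu(B_0)$ as $B_0$ is doubling). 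A layer-cake estimate---splitting $h=h\chi_{\{|h|>t/2\}}+h\chi_{\{|h|\le t/2\}}$ at each height $t$, noting $\cm(h)\le t/2+\cm(h\chi_{\{|h|>t/2\}})$, and applying the weak-type $(1,1)$ bound for $\wz M\gs\cm$---then gives $\int_{B_0}\cm(h)\,d\mu\ls\|f\|_{\rbmo(\mu)}\mu(B_0)$, establishing \eqref{e2.8}.

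The inequality \eqref{e2.9} for $(6,\bz_6)$-doubling balls $B\subset S$ is proved along the same lines. Bounding $m_S(\cm f)$ below by the contribution to $\cm(f)$ of the balls not contained in $2S$---which, as above, is essentially constant across $S$---reduces $m_B(\cm f)-m_S(\cm f)$ to $\frac1{\mu(B)}\int_B\cm((f-f_S)\chi_{2S})\,d\mu$ up to an $O(\|f\|_{\rbmo(\mu)})$ error. Splitting $2S$ into $2B$ and $2S\setminus2B$ and writing $f-f_S=(f-f_B)+(f_B-f_S)$, the $2B$-piece is handled by the $L\log L$ estimate of the previous paragraph, the constant $f_B-f_S$ contributes $|f_B-f_S|\ls[1+\dz(B,\,S)]\|f\|_{\rbmo(\mu)}$, and the annular piece is controlled by estimating $m_{B'}(f)-f_B$ over the contributing balls $B'$ (whose radii lie between $r_B$ and a multiple of $r_S$) through Lemma \ref{l2.1}, again by $[1+\dz(B,\,S)]\|f\|_{\rbmo(\mu)}$. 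Combining these gives \eqref{e2.9}, and Proposition \ref{p2.3} finishes the proof with $\|\cm(f)\|_{\rblo(\mu)}\ls\|f\|_{\rbmo(\mu)}$.
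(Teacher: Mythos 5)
Your proposal is correct in substance and shares the paper's overall architecture---both reduce the theorem to verifying \eqref{e2.8} and \eqref{e2.9} for $\cm(f)$ via Proposition \ref{p2.3}, and both split the maximal function into a ``near'' part and a ``far'' part, the far part being essentially constant (up to $C\|f\|_{\rbmo(\mu)}$, or $C[1+\dz(B,\,S)]\|f\|_{\rbmo(\mu)}$) by the $\dz$-coefficient machinery of Lemma \ref{l2.1} together with the regularity \eqref{e3.4} of means over doubling balls. The genuine difference is in the key local estimate, which you rightly single out as the main obstacle. The paper handles it by H\"older's inequality, the $L^2(\mu)$-boundedness of $\cm$, and the $L^p$ characterization of $\rbmo(\mu)$ with $p=2$ (Lemma \ref{l3.2}(iv), quoted from \cite[Corollary 6.3]{h10}): writing $f_1\equiv[f-m_B(f)]\chi_{3B}$, one gets $\int_B\cm(f_1)\,d\mu\le[\mu(B)]^{1/2}\|f_1\|_{L^2(\mu)}\ls\mu(B)\|f\|_{\rbmo(\mu)}$ in two lines. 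You instead run the classical route: John--Nirenberg for $\rbmo(\mu)$ gives $(f-f_{B_0})\chi_{2B_0}\in L\log L$, and a truncation-at-height-$t/2$ plus layer-cake argument with the weak $(1,1)$ bound for $\wz M$ closes the estimate. Both rest ultimately on Hyt\"onen's John--Nirenberg theory, but note that your route needs the full exponential distributional inequality from \cite{h10}---the fixed-$p$ constants recorded in Lemma \ref{l3.2}(iv) do not by themselves yield $L\log L$---whereas the paper's $L^2$ trick needs only what is already stated in this paper; on the other hand, your argument uses only the weak $(1,1)$ bound for the maximal operator rather than its $L^2$-boundedness, so it is the ``minimal hypotheses'' version. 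Your direct proof of the dichotomy (splitting small/large radii, weak $(1,1)$ applied to $f\chi_{B(x_0,\,R)}$) also differs from the paper, which extracts finiteness a.e.\ from the averaged claim \eqref{e3.7}; both are fine. Two small imprecisions, neither fatal: the radius $R$ in your dichotomy argument depends on $x$, so one should exhaust $\cx$ by countably many balls centered at $x_0$; and in the annular piece of \eqref{e2.9}, contributing balls have radii bounded by a multiple of $r_S$ only because your reduction keeps just the balls contained in $2S$ (balls not contained in $2S$ having been absorbed into the near-constant part)---if one bounded the full $\cm((f-f_S)\chi_{2S})$, arbitrarily large doubling balls would also contribute, though their contribution is controlled by $\|f\|_{\rbmo(\mu)}$ via their own doubling property.
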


From Lemma \ref{l3.1} and Theorem \ref{t3.1}, we immediately deduce
the following result. We omit the details.

\begin{thm}\label{t3.2}
A locally integrable function $f$ belongs to $\rblo(\mu)$ if and
only if there exist $h\in L^\fz(\mu)$ and $g\in\rbmo(\mu)$ with
$\cm(g)$ finite $\mu$-almost everywhere such that
\begin{equation}\label{e3.2}
f=\cm(g)+h.
\end{equation}
Furthermore,
$\|f\|_{\rblo(\mu)}\sim\inf(\|g\|_{\rbmo(\mu)}+\|h\|_{L^\fz(\mu)})$,
where the infimum is taken over all representations of $f$ as in
\eqref{e3.2}.
\end{thm}

To prove Theorem \ref{t3.1}, we need the following characterization
of $\rbmo(\mu)$.

\begin{lem}\label{l3.2}
Let $\eta,\,\rho\in(1,\,\fz)$, and $\bz_\rho$ be as in \eqref{e2.1}.
For $f\in L^1_\loc(\mu)$, the following statements are equivalent:
\begin{enumerate}
\item[\rm(i)] $f\in\rbmo(\mu)$.

\item[\rm(ii)] There exists a non-negative constant $C_3$ such that for all
$(\rho,\,\bz_\rho)$-doubling balls $B$,
\begin{equation}\label{e3.3}
\frac{1}{\mu(B)}\int_B\lf|f(y)-m_B(f)\r|\,d\mu(y)\le C_3,
\end{equation}
and that for all $(\rho,\,\bz_\rho)$-doubling balls $B\subset S$,
\begin{equation}\label{e3.4}
|m_B(f)-m_S(f)|\le C_3[1+\dz(B,\,S)].
\end{equation}

\item[\rm(iii)] There exists a non-negative constant $C_4$ satisfying \eqref{e3.4}
and that for all balls $B$,
\begin{equation}\label{e3.5}
\frac{1}{\mu(\eta B)}\int_B\lf|f(y)-m_{{\wz
B}^\rho}(f)\r|\,d\mu(y)\le C_4.
\end{equation}

\item[\rm(iv)] Let $p\in[1,\,\fz)$. There exists a non-negative constant
$C_5$ satisfying \eqref{e3.4} and that for all balls $B$,
\begin{equation}\label{e3.6}
\lf\{\frac{1}{\mu(\eta B)}\int_B\lf|f(y)-m_{{\wz
B}^\rho}(f)\r|^p\,d\mu(y)\r\}^{1/p}\le C_5.
\end{equation}
\end{enumerate}
Moreover, the minimal constants $C_3$, $C_4$ and $C_5$ as above are
equivalent to $\|f\|_{\rbmo(\mu)}$.
\end{lem}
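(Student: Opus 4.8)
The plan is to prove the cycle (i) $\Rightarrow$ (ii) $\Rightarrow$ (iii) $\Rightarrow$ (i), which makes (i), (ii) and (iii) mutually equivalent with comparable constants, and then to attach (iv) to this chain by proving (iv) $\Rightarrow$ (iii) and (i) $\Rightarrow$ (iv). Since, by an argument parallel to Propositions \ref{p2.1} and \ref{p2.2} (see also \cite[Lemma 4.6]{h10}), each of the four conditions is insensitive to the choice of $\eta,\,\rho\in(1,\,\fz)$, I would first reduce to a convenient pair with $\eta\le\rho$, so that every $(\rho,\,\bz_\rho)$-doubling ball $B$ satisfies $\mu(\eta B)\le\mu(\rho B)\le\bz_\rho\mu(B)$.

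For (i) $\Rightarrow$ (ii), take a family $\{f_B\}_B$ realizing $\|f\|_{\rbmo(\mu)}$. Doubling gives $|m_B(f)-f_B|\le\frac1{\mu(B)}\int_B|f(y)-f_B|\,d\mu(y)\ls\|f\|_{\rbmo(\mu)}$ for every $(\rho,\,\bz_\rho)$-doubling ball $B$; feeding this into the triangle inequality yields \eqref{e3.3}, and combining it with the $\rbmo(\mu)$ bound $|f_B-f_S|\ls\|f\|_{\rbmo(\mu)}[1+\dz(B,\,S)]$ yields \eqref{e3.4}. For (ii) $\Rightarrow$ (iii), note that \eqref{e3.4} is already available, so I only need \eqref{e3.5}: for an arbitrary ball $B$ I would cover it by small doubling balls $\{B_i\}_i$ contained in $\eta B$ (as in the proof of Proposition \ref{p2.1}, using \cite[Lemma 3.3]{h10}), and estimate $\int_B|f-m_{\wz B^\rho}(f)|\,d\mu$ by $\sum_i\int_{B_i}|f-m_{B_i}(f)|\,d\mu+\sum_i\mu(B_i)|m_{B_i}(f)-m_{\wz B^\rho}(f)|$. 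The first sum is controlled by \eqref{e3.3} and the geometrically doubling bound on $\sharp\{B_i\}$, while the second is controlled by \eqref{e3.4} once parts (ii), (iii) and (iv) of Lemma \ref{l2.1} are used to see that $\dz(B_i,\,\wz B^\rho)\ls1$.

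For (iii) $\Rightarrow$ (i), I would set $f_B\equiv m_{\wz B^\rho}(f)$. Then \eqref{e3.5} is exactly the first defining inequality of $\rbmo(\mu)$, and the remaining task is the difference estimate $|m_{\wz B^\rho}(f)-m_{\wz S^\rho}(f)|\ls C_4[1+\dz(B,\,S)]$ for all balls $B\subset S$. This is the most delicate step of the cycle, and I would carry it out exactly as in Proposition \ref{p2.2}: split into the cases $r_{\wz S^\rho}\ge r_{\wz B^\rho}$ and $r_{\wz S^\rho}<r_{\wz B^\rho}$, insert an auxiliary doubling ball containing both $\wz B^\rho$ and $\wz S^\rho$, apply \eqref{e3.4} along the resulting chains of comparable doubling balls, and invoke Lemma \ref{l2.1} for the requisite $\dz$-estimates.

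Finally, (iv) $\Rightarrow$ (iii) is immediate, since H\"older's inequality together with $\mu(B)\le\mu(\eta B)$ turns \eqref{e3.6} into \eqref{e3.5} (and \eqref{e3.4} is common to both). The one genuinely new ingredient is (i) $\Rightarrow$ (iv), which I expect to be the main obstacle: upgrading an $L^1$ mean-oscillation bound to an $L^p$ one for $p>1$ cannot be done by elementary inequalities and relies on a John--Nirenberg inequality for $\rbmo(\mu)$, available in this setting from \cite{h10}. Granting it, \eqref{e3.6} follows by integrating the exponential decay of the distribution function of $f-m_{\wz B^\rho}(f)$, and carrying the constants through all the implications gives the claimed equivalence of $C_3$, $C_4$ and $C_5$ with $\|f\|_{\rbmo(\mu)}$.
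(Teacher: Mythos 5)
Your overall architecture coincides with the paper's: the equivalence of (i), (ii) and (iii) by mean-oscillation arguments (which the paper simply delegates to \cite[Proposition 2.2]{hyy10}), (iv) $\Rightarrow$ (iii) by the H\"older inequality, and (i) $\Rightarrow$ (iv) by the John--Nirenberg-type inequality of \cite[Corollary 6.3]{h10} followed by replacing $f_B$ with $m_{\wz B^\rho}(f)$ --- exactly the paper's route. Your steps (i) $\Rightarrow$ (ii), (iii) $\Rightarrow$ (i) (taking $f_B\equiv m_{\wz B^\rho}(f)$ and running the two-case argument of Proposition \ref{p2.2}), (iv) $\Rightarrow$ (iii), and (i) $\Rightarrow$ (iv) are all sound as sketched.

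The gap is in (ii) $\Rightarrow$ (iii), which is precisely where the real work of the lemma is hidden. Your covering argument cannot be run as stated. If the covering balls $B_i$ have radius $\ez r_B$ comparable to $r_B$ (as in the proof of Proposition \ref{p2.1}), then indeed $\sharp I\ls 1$ and $\dz(B_i,\,\wz B^\rho)\ls 1$, but such $B_i$ need not be $(\rho,\,\bz_\rho)$-doubling, and in (ii) the hypotheses \eqref{e3.3} and \eqref{e3.4} are available \emph{only} for doubling balls, so neither of your two sums can be estimated. If instead you take the small doubling balls supplied by \cite[Lemma 3.3]{h10} (which is what the proof of Proposition \ref{p2.3}, not of Proposition \ref{p2.1}, actually does), their radii are of the form $30^{-k}r_B$ with $k$ depending on the point and unbounded; then the cardinality of the covering is no longer controlled by geometric doubling --- one needs the $5r$-covering Lemma \ref{l2.2}, a countable \emph{disjoint} subfamily, and the doubling property of each $B_i$ to get $\sum_i\mu(5B_i)\ls\mu(\eta B)$ --- and, more seriously, the claim $\dz(B_i,\,\wz B^\rho)\ls 1$ is false: the coefficient $\dz$ between balls grows like the logarithm of the radius ratio (already for Lebesgue measure on $\rn$, where $\dz(B,\,S)\sim\log(r_S/r_B)$ for concentric balls), so \eqref{e3.4} only yields $|m_{B_i}(f)-m_{\wz B^\rho}(f)|\le C_3[1+\dz(B_i,\,\wz B^\rho)]$ with a non-uniform right-hand side. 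The correct device --- the one behind \cite[Proposition 2.2]{hyy10} and visible in the paper's own proof of Proposition \ref{p2.3}, see the claim \eqref{e2.10} --- is to chain through the auxiliary ball $A_x$, the \emph{smallest} doubling enlargement $30^kB_x$ of $B_x$: Lemma \ref{l2.1}(iii) gives $\dz(B_x,\,A_x)\ls 1$ precisely because $A_x$ is the smallest such enlargement, one has $r_{A_x}\ge r_B$, and then a two-case comparison of $r_{\wz B^\rho}$ with $r_{A_x}$ allows \eqref{e3.4} to be applied along pairs of doubling balls whose mutual $\dz$-coefficients are uniformly bounded. With this replacement your cycle closes; without it, the step (ii) $\Rightarrow$ (iii) fails.
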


\begin{proof}
The equivalence of (i) and (ii) is a special case of
\cite[Proposition 2.2]{hyy10}. Obviously, (iii) implies (ii). By an
argument similar to that used in the proof of \cite[Proposition
2.2]{hyy10}, we have that (ii) implies (iii). Hence, (i), (ii) and
(iii) are equivalent.

We now prove the equivalence of (iii) and (iv). By the H\"older
inequality, it is easy to see that (iv) implies (iii). Conversely,
it follows from \cite[Corollary 6.3]{h10} that for any ball $B$,
\begin{equation*}
\lf\{\frac{1}{\mu(\eta
B)}\int_B\lf|f(y)-f_B\r|^p\,d\mu(y)\r\}^{1/p}\ls\|f\|_{\rbmo(\mu)}.
\end{equation*}
On the other hand, from the equivalence of (i) and (iii), we deduce
that the number $f_B$ in the definition of $\rbmo(\mu)$ can be
chosen to be $m_{{\wz B}^\rho}$. Therefore,
\begin{equation*}
\lf\{\frac{1}{\mu(\eta B)}\int_B\lf|f(y)-m_{{\wz
B}^\rho}(f)\r|^p\,d\mu(y)\r\}^{1/p}\ls\|f\|_{\rbmo(\mu)}\sim\min\{C_4\},
\end{equation*}
which shows that (iii) implies (iv) and hence completes the proof of
Lemma \ref{l3.2}.
\end{proof}

\begin{proof}[Proof of Theorem \ref{t3.1}]
Suppose that $f\in\rbmo(\mu)$ and there exists $x_0\in\cx$ such that
$\cm(f)(x_0)<\fz$. First, we claim that there exists a positive
constant $C$ independent of $f$ such that for all
$(6,\,\bz_6)$-doubling balls $B\ni x_0$,
\begin{equation}\label{e3.7}
\frac{1}{\mu(B)}\int_B\cm(f)(y)\,d\mu(y)\le
C\|f\|_{\rbmo(\mu)}+\inf_{x\in B}\cm(f)(x).
\end{equation}
To prove this, we decompose $f$ as
$$
f=\lf[f-m_B(f)\r]\chi_{3B}+[m_B(f)\chi_{3B}+f\chi_{\cx\setminus(3B)}]\equiv
f_1+f_2.
$$
We choose $\eta\equiv6/5$ and $\rho\equiv6$ in Lemma \ref{l3.2}.
Since $\cm$ is bounded on $L^2(\mu)$, by the H\"older inequality,
\eqref{e3.6}, \eqref{e3.4}, and (ii) and (iii) of Lemma \ref{l2.1},
we have
\begin{align}\label{e3.8}
&\int_B\cm(f_1)(y)\,d\mu(y)\\
&\hs\le[\mu(B)]^{1/2}\lf\{\int_\cx|\cm(f_1)(y)|^2\,d\mu(y)\r\}^{1/2}
\ls[\mu(B)]^{1/2}\lf\{\int_\cx|f_1(y)|^2\,d\mu(y)\r\}^{1/2}\nonumber\\
&\hs\ls[\mu(B)]^{1/2}\lf\{\int_{3B}|f(y)-m_{\wz{3B}^6}(f)|^2\,d\mu(y)\r.\nonumber\\
&\hs\hs\lf.+\int_{3B}|m_B(f)-m_{\wz{3B}^6}(f)|^2\,d\mu(y)\r\}^{1/2}\nonumber\\
&\hs\ls[\mu(B)]^{1/2}\lf\{\lf[\mu\lf(\frac{18}5B\r)\r]^{1/2}
+[\mu(3B)]^{1/2}\lf[1+\dz(B,\,\wz{3B}^6)\r]\r\}\|f\|_{\rbmo(\mu)}\nonumber\\
&\hs\ls\mu(6B)\|f\|_{\rbmo(\mu)}\ls\mu(B)\|f\|_{\rbmo(\mu)}.\nonumber
\end{align}
Next, we show that
\begin{equation}\label{e3.9}
\frac{1}{\mu(B)}\int_B\cm(f_2)(y)\,d\mu(y)\ls\|f\|_{\rbmo(\mu)}+\inf_{x\in
B}\cm(f)(x).
\end{equation}
It suffices to show that for any $y\in B$,
\begin{equation*}
\cm(f_2)(y)\ls\|f\|_{\rbmo(\mu)}+\inf_{x\in B}\cm(f)(x).
\end{equation*}
To this end, it is enough to show that for any
$(6,\,\bz_6)$-doubling ball $S\ni y$ and $y\in B$,
\begin{equation}\label{e3.10}
\frac{1}{\mu(S)}\int_S
f_2(z)\,d\mu(z)\ls\|f\|_{\rbmo(\mu)}+\inf_{x\in B}\cm(f)(x).
\end{equation}
If $S\subset 3B$, we immediately have that
$$
\frac{1}{\mu(S)}\int_S f_2(z)\,d\mu(z)=m_B(f)\le\inf_{x\in
B}\cm(f)(x).
$$
If $S\cap[\cx\setminus(3B)]\neq\emptyset$. Then $r_S>r_B$ and
$3B\subset (5S)$. Write
$$f_2=\lf[m_B(f)-m_{\wz{5S}^6}(f)\r]\chi_{3B}
+\lf[f-m_{\wz{5S}^6}(f)\r]\chi_{\cx\setminus(3B)}+m_{\wz{5S}^6}(f).$$
Obviously, $m_{\wz{5S}^6}(f)\le\inf_{x\in B}\cm(f)(x)$. From
\eqref{e3.5}, it follows that
\begin{align*}
&\int_S\lf\{\lf[m_B(f)-m_{\wz{5S}^6}(f)\r]\chi_{3B}(z)
+\lf[f(z)-m_{\wz{5S}^6}(f)\r]\chi_{\cx\setminus(3B)}(z)\r\}\,d\mu(z)\\
&\hs\le\mu(3B)\lf|m_B(f)-m_{\wz{5S}^6}(f)\r|
+\int_{5S}\lf|f(z)-m_{\wz{5S}^6}(f)\r|\chi_{\cx\setminus(3B)}(z)\,d\mu(z)\\
&\hs\le\frac{\mu(6B)}{\mu(B)}\int_B\lf|f(z)-m_{\wz{5S}^6}(f)\r|\,d\mu(z)
+\int_{5S\setminus(3B)}\lf|f(z)-m_{\wz{5S}^6}(f)\r|\,d\mu(z)\\
&\hs\ls\int_{5S}\lf|f(z)-m_{\wz{5S}^6}(f)\r|\,d\mu(z)
\ls\mu(6S)\|f\|_{\rbmo(\mu)}\ls\mu(S)\|f\|_{\rbmo(\mu)},
\end{align*}
which implies \eqref{e3.10}. Hence, \eqref{e3.9} holds. Combining
the estimates for \eqref{e3.8} and \eqref{e3.9}, we obtain
\eqref{e3.7}.

From \eqref{e3.7}, it follows that for $f\in\rbmo(\mu)$, if
$\cm(f)(x_0)<\fz$ for some point $x_0\in\cx$, then $\cm(f)$ is
$\mu$-finite almost everywhere and in this case,
\begin{equation}\label{e3.11}
\frac{1}{\mu(B)}\int_B\lf[\cm(f)(y)-{\mathop\einf_{x\in
B}}\cm(f)(x)\r]\,d\mu(y)\ls \|f\|_{\rbmo(\mu)},
\end{equation}
provided that $B$ is a $(6,\,\bz_6)$-doubling ball. To prove
$\cm(f)\in\rblo(\mu)$, by Proposition \ref{p2.3}, we still need to
prove that for any $(6,\,\bz_6)$-doubling balls $B\subset S$,
\begin{equation}\label{e3.12}
m_B[\cm(f)]-m_S[\cm(f)]\ls[1+\dz(B,\,S)]\|f\|_{\rbmo(\mu)}.
\end{equation}
To prove \eqref{e3.12}, for any point $x\in B$, we set
$$\cm_1(f)(x)\equiv\sup_{\gfz{P\ni x,\, P\,(6,\,\bz_6)-\rm
doubling}{r_P\le 4r_S}}\frac1{\mu(P)}\int_P f(y)\,d\mu(y),$$
$$\cm_2(f)(x)\equiv\sup_{\gfz{P\ni x,\, P\,(6,\,\bz_6)-\rm
doubling}{r_P> 4r_S}}\frac1{\mu(P)}\int_P f(y)\,d\mu(y),$$ ${\cal
U}_{1,\,B}\equiv\{x\in B:\,\cm_1(f)(x)\ge\cm_2(f)(x)\}$ and ${\cal
U}_{2,\,B}\equiv B\setminus{\cal U}_{1,\,B}$. Then for any $x\in B$,
$\cm(f)(x)=\max[\cm_1(f)(x),\,\cm_2(f)(x)]$. By writing
$$f=[f-m_S(f)]\chi_{3B}+[f-m_S(f)]\chi_{\cx\setminus(3B)}+m_S(f)$$
and using the fact that $m_S(f)\le m_S[\cm(f)]$, we see that
\begin{align*}
m_B[\cm(f)]-m_S[\cm(f)]&\le\frac{1}{\mu(B)}\int_{{\cal U}_{1,\,B}}
\cm_1([f-m_S(f)]\chi_{3B})(x)\,d\mu(x)\\
&\hs+\frac{1}{\mu(B)}\int_{{\cal U}_{1,\,B}}
\cm_1([f-m_S(f)]\chi_{\cx\setminus(3B)})(x)\,d\mu(x)\\
&\hs+\frac{1}{\mu(B)}\int_{{\cal U}_{2,\,B}}
\{\cm_2(f)(x)-m_S[\cm(f)]\}\,d\mu(x)\\
&\equiv{\rm I_1}+{\rm I_2}+{\rm I_3}.
\end{align*}

Notice that $\cm$ is bounded on $L^2(\mu)$. From this, the H\"older
inequality, Lemma \ref{l3.2}, and (ii) and (iii) of Lemma
\ref{l2.1}, it follows that
\begin{align*}
{\rm I_1}&\le\lf\{\frac1{\mu(B)}\int_B
\lf|\cm_1([f-m_S(f)]\chi_{3B})(x)\r|^2\,d\mu(x)\r\}^{1/2}\\
&\ls\lf\{\frac1{\mu(B)}\int_{3B}|f(x)-m_S(f)|^2\,d\mu(x)\r\}^{1/2}\\
&\ls\lf\{\frac1{\mu(B)}\int_{3B}
\lf|f(x)-m_{\wz{3B}^6}(f)\r|^2\,d\mu(x)\r\}^{1/2}
+\lf|m_{\wz{3B}^6}(f)-m_B(f)\r|\\
&\hs+|m_B(f)-m_S(f)|\\
&\ls[1+\dz(B,\,S)]\|f\|_{\rbmo(\mu)}.
\end{align*}

To estimate ${\rm I_2}$, we first claim that for any point $x\in B$
and any $(6,\,\bz_6)$-doubling ball $P\ni x$ with $r_P\le 4r_S$,
\begin{align}\label{e3.13}
{\rm J}&\equiv\frac1{\mu(P)}\int_P|f(y)-m_S(f)|
\chi_{\cx\setminus(3B)}(y)\,d\mu(y)\\
&\ls[1+\dz(B,\,S)]\|f\|_{\rbmo(\mu)}.\nonumber
\end{align}
If $P\subset 3B$, then ${\rm J}=0$ and \eqref{e3.13} holds
automatically. Assume that $P\not\subset 3B$. We then have that
$r_P>r_B$, which together with the fact that $r_P\le 4r_S$ implies
that $B\subset 3P\subset 17S$. Thus, \eqref{e3.3} and \eqref{e3.4},
together with (ii), (iii) and (iv) of Lemma \ref{l2.1}, yield that
\begin{align*}
{\rm J}&\le\frac1{\mu(P)}\int_P|f(y)-m_P(f)|\,d\mu(y)
+\lf|m_P(f)-m_{\wz{3P}^6}(f)\r|\\
&\hs+\lf|m_{\wz{3P}^6}(f)-m_B(f)\r|+|m_B-m_S(f)|
\ls[1+\dz(B,\,S)]\|f\|_{\rbmo(\mu)},
\end{align*}
which further implies that for all $x\in B$,
$$
\cm_1([f-m_S(f)]\chi_{\cx\setminus(3B)})(x)\ls[1+\dz(B,\,S)]\|f\|_{\rbmo(\mu)}.
$$
From this, we deduce that ${\rm
I_2}\ls[1+\dz(B,\,S)]\|f\|_{\rbmo(\mu)}$.

Now we estimate ${\rm I_3}$. Notice that for any $x\in B$, any
$(6,\,\bz_6)$-doubling ball $P$ containing $x$ with $r_P>4r_S$ and
$B\subset S$, $S\subset3P$. Then from \eqref{e3.4} and the fact
$m_{\wz{3P}^6}(f)\le m_S[\cm(f)]$, it follows that
\begin{align*}
m_P(f)-m_S[\cm(f)]&\le\lf|m_P(f)-m_{\wz{3P}^6}(f)\r|
+m_{\wz{3P}^6}(f)-m_S[\cm(f)]\\
&\ls\|f\|_{\rbmo(\mu)}.
\end{align*}
Taking the supremum over all $(6,\,\bz_6)$-doubling balls $P$
containing $x$ with $r_P>4r_S$, we have that for all $x\in B$,
$$
\cm_2(f)(x)-m_S[\cm(f)]\ls\|f\|_{\rbmo(\mu)}.
$$
This implies that ${\rm I_3}\ls\|f\|_{\rbmo(\mu)}$.

Combining the estimates for ${\rm I_1}$ through ${\rm I_3}$ leads to
\eqref{e3.12}, which together with \eqref{e3.11} implies that $\cm$
is bounded from $\rbmo(\mu)$ to $\rblo(\mu)$ and hence completes the
proof of Theorem \ref{t3.1}.
\end{proof}

\section{Boundedness of maximal Calder\'on-Zygmund
operators}\label{s4}

\hskip\parindent This section is devoted to the boundedness of the
maximal operators associated with the Calder\'on-Zygmund operators
introduced in \cite{hm}.

Let $\triangle\equiv\{(x,\,x):\,x\in\cx\}$ and $L^\fz_b(\cx)$ denote
the {\it space of all functions in $L^\fz(\cx)$ with bounded
support}. A {\it standard kernel} is a mapping
$K:\,(\cx\times\cx)\backslash\triangle\rightarrow\mathbb{C}$ for
which, there exist some positive constants $\sz$ and $C$ such that
for all $x,\,y\in\cx$ with $x\neq y$,
\begin{equation}\label{e4.1}
|K(x,\,y)|\le C\frac{1}{\lz(x,\,d(x,\,y))},
\end{equation}
and that for all $x,\,{\wz x},\,y\in\cx$ with $d(x,\,{\wz x})\le
\frac{d(x,\,y)}2$,
\begin{equation}\label{e4.2}
|K(x,\,y)-K({\wz x},\,y)|+|K(y,\,x)-K(y,\,{\wz x})|\le
C\frac{[d(x,\,{\wz x})]^\sz}{[d(x,\,y)]^\sz\lz(x,\,d(x,\,y))}.
\end{equation}
A linear operator $T$ is called a {\it Calder\'on-Zygmund operator}
with kernel $K$ satisfying \eqref{e4.1} and \eqref{e4.2} if for all
$f\in L^{\infty}_b(\cx)$ and $x\not\in\supp(f)$,
\begin{equation}\label{e4.3}
Tf(x)\equiv\int_\cx K(x,\,y)f(y)\,d\mu(y).
\end{equation}

A new example of operators with kernel satisfying \eqref{e4.1} and
\eqref{e4.2} is the so-called Bergman-type operator appearing in
\cite{vw}; see also \cite{hm} for an explanation.

Now, we define the corresponding maximal Calder\'on-Zygmund operator
associated with the kernel $K$. For any $\ez\in(0,\,\fz)$, define
the {\it truncated operator} $T_\ez$ by setting, for all $x\in\cx$,
\begin{equation}\label{e4.4} T_\ez
f(x)\equiv\int_{d(x,\,y)>\ez} K(x,\,y)f(y)\,d\mu(y).
\end{equation}
The {\it maximal Calder\'on-Zygmund operator} $T_*$ is defined by
setting, for all $x\in\cx$,
\begin{equation}\label{e4.5} T_*
f(x)\equiv\sup_{\ez>0}|T_\ez f(x)|.
\end{equation}

\begin{rem}\label{r4.1}\rm
Let $\cx\equiv\rn$. It is well known that if $\mu$ is the
$n$-dimensional Lebesgue measure and $T$ bounded on $L^2(\rn)$, then
$T_*$ is bounded from $L^\fz(\mu)$ to $\mathop\mathrm{BMO}(\rn)$
(see \cite{s67}), and furthermore, bounded from $L^\fz(\mu)$ to
$\mathop\mathrm{BLO}(\rn)$ (see \cite{l85}). When $\mu$ is a
non-doubling measure with polynomial growth, Tolsa \cite{t01} proved
that if $T$ is bounded on $L^2(\mu)$, then $T$ is bounded from
$L^\fz(\mu)$ to $\rbmo(\mu)$, and moreover, the boundedness of $T_*$
from $L^\fz(\mu)$ to $\rblo(\mu)$ was obtained by Jiang \cite{j05}.
\end{rem}

It was proved in \cite{hlyy} that if the Calder\'on-Zygmund operator
$T$ is bounded on $L^2(\mu)$, then the maximal operator $T_*$ is of
weak type $(1,\,1)$ and bounded on $L^p(\mu)$ for any
$p\in(1,\,\fz)$. On the boundedness of $T_*$ when $p=\fz$, we have
the following conclusion.

\begin{thm}\label{t4.1}
Let $T$ be the Calder\'on-Zygmund operator as in \eqref{e4.3} with
kernel $K$ satisfying \eqref{e4.1} and \eqref{e4.2}. If $T$ is
bounded on $L^2(\mu)$, then the maximal operator $T_*$ as in
\eqref{e4.5} is bounded from $L^\fz(\mu)$ to $\rblo(\mu)$.
\end{thm}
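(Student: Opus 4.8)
The plan is to reduce the $L^\fz(\mu)\to\rblo(\mu)$ boundedness of $T_*$ to the characterization of $\rblo(\mu)$ provided by Lemma~\ref{l3.1}. Fix $f\in L^\fz(\mu)$; by Proposition~\ref{p2.3} and Lemma~\ref{l3.1}, it suffices to produce a constant $C$, independent of $f$, so that $\cm(T_*f)-T_*f\in L^\fz(\mu)$ with norm $\ls\|f\|_{L^\fz(\mu)}$, together with the doubling estimate \eqref{e2.9} for $T_*f$. Concretely, I would show that for every $(6,\,\bz_6)$-doubling ball $B$ and $\mu$-almost every $x\in B$,
\begin{equation*}
\frac{1}{\mu(B)}\int_B T_*f(y)\,d\mu(y)-T_*f(x)\ls\|f\|_{L^\fz(\mu)},
\end{equation*}
which is precisely the pointwise form of the bound $\cm(T_*f)-T_*f\in L^\fz(\mu)$ once one takes the supremum over doubling balls $B\ni x$.

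The core of the argument is a standard Calder\'on--Zygmund splitting adapted to this setting. First I would verify that $T_*f$ is finite $\mu$-a.e.\ and locally integrable: since $T_*$ is bounded on $L^2(\mu)$ by the hypothesis together with \cite{hlyy}, one controls $\int_B T_*f$ after decomposing $f=f\chi_{2B}+f\chi_{\cx\setminus(2B)}$. For the local part $f\chi_{2B}$ one uses $L^2(\mu)$-boundedness of $T_*$ and the H\"older inequality to bound $\frac1{\mu(B)}\int_B T_*(f\chi_{2B})\ls\|f\|_{L^\fz(\mu)}$, invoking the doubling of $B$ to pass from $\mu(2B)$ to $\mu(B)$. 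For the far part $f\chi_{\cx\setminus(2B)}$, the key point is that for $x,\,y\in B$ the truncated integrals $T_\ez(f\chi_{\cx\setminus(2B)})(x)$ and $T_\ez(f\chi_{\cx\setminus(2B)})(y)$ differ by a controllable amount: using the regularity estimate \eqref{e4.2} with $d(x,\,y)\le 2r_B\le d(x,\,z)$ for $z\notin 2B$, the difference is dominated by
\begin{equation*}
\int_{\cx\setminus(2B)}\frac{[d(x,\,y)]^\sz}{[d(x,\,z)]^\sz\lz(x,\,d(x,\,z))}\,d\mu(z)\,\|f\|_{L^\fz(\mu)},
\end{equation*}
and a geometric-series estimate over annuli $2^{k+1}B\setminus 2^k B$, combined with the upper doubling property \eqref{e1.2} and the coefficient bounds in Lemma~\ref{l2.1}, shows this is $\ls[1+\dz(B,\,\cx)]\|f\|_{L^\fz(\mu)}$, hence $\ls\|f\|_{L^\fz(\mu)}$ for the annular tails. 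The technical care needed here is that the supremum over $\ez$ does not commute with these manipulations, so I would estimate $T_*$ by treating small $\ez$ (where the truncation cuts inside $2B$, leaving a local piece handled by $L^2$-boundedness) and large $\ez$ separately, or equivalently by comparing $T_*f(x)$ to $T_*f(y)+C\|f\|_{L^\fz(\mu)}$ uniformly in $\ez$.

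The doubling estimate \eqref{e2.9} for $T_*f$ on $(6,\,\bz_6)$-doubling balls $B\subset S$ is handled by the same splitting relative to $S$: writing $f=f\chi_{2S}+f\chi_{\cx\setminus(2S)}$, the local part contributes $\ls\|f\|_{L^\fz(\mu)}$ via $L^2$-boundedness and doubling, while the far part is nearly constant across $B$ and $S$ with oscillation governed by $\dz(B,\,S)$ through the annular estimate above and part~(iv) of Lemma~\ref{l2.1}, giving $m_B(T_*f)-m_S(T_*f)\ls[1+\dz(B,\,S)]\|f\|_{L^\fz(\mu)}$. I expect the main obstacle to be the careful treatment of the supremum over $\ez$ in $T_*$: unlike the linear operator $T$, the maximal operator is only sublinear, so the splitting $f=f_1+f_2$ yields $T_*f\le T_*f_1+T_*f_2$ in one direction but the reverse comparison needed to control oscillations requires choosing, for each $x$, a near-optimal $\ez=\ez(x)$ and then showing the resulting truncated kernel integral is uniformly comparable at nearby points. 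Once the pointwise comparison $|T_\ez f(x)-T_\ez f(y)|\ls\|f\|_{L^\fz(\mu)}$ is established uniformly in $\ez$ for the far part, and the local part is absorbed by $L^2$-boundedness, taking suprema and then averaging over $B$ completes both \eqref{e3.1} and \eqref{e2.9}, and Lemma~\ref{l3.1} together with Proposition~\ref{p2.3} yields $T_*f\in\rblo(\mu)$ with the desired norm bound.
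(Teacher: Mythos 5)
Your strategy for the averaged estimate (the analogue of the paper's \eqref{e4.6}) is essentially the paper's own: split $f$ into a piece supported near $B$ and a far piece, use the $L^2(\mu)$-boundedness of $T_*$ with the H\"older inequality and the $(6,\,\bz_6)$-doubling of $B$ for the near piece, and control the far piece by the kernel estimates, separating small and large truncation parameters $\ez$ exactly as in the paper's \eqref{e4.9} and \eqref{e4.10}; you also correctly flag the non-commutation of the supremum in $\ez$ as the main subtlety. (Two slips there: with the splitting $f\chi_{2B}+f\chi_{\cx\setminus(2B)}$, for $x,\,y\in B$ and $z\notin 2B$ one only has $d(x,\,z)>r_B$ while $d(x,\,y)$ may be as large as $2r_B$, so the hypothesis $d(x,\,y)\le d(x,\,z)/2$ of \eqref{e4.2} can fail near the boundary; the paper uses $5B$ precisely to guarantee $d(x,\,z)\ge 4r_B\ge 2\,d(x,\,y)$. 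Also, the bound ``$\ls[1+\dz(B,\,\cx)]\|f\|_{L^\fz(\mu)}$'' is vacuous, since $\dz(B,\,\cx)$ need not be finite; the tail converges because of the factor $[d(x,\,y)/d(x,\,z)]^\sz$ together with the upper doubling of $\lz$, with no $\dz$ coefficient involved.) The genuine gap is in your verification of \eqref{e2.9}. With the two-piece splitting $f=f\chi_{2S}+f\chi_{\cx\setminus(2S)}$, the ``local'' part must be averaged over the \emph{small} ball $B$, and the $L^2$ argument only yields
\begin{equation*}
\frac{1}{\mu(B)}\int_B T_*(f\chi_{2S})(x)\,d\mu(x)\ls\frac{[\mu(2S)]^{1/2}}{[\mu(B)]^{1/2}}\,\|f\|_{L^\fz(\mu)},
\end{equation*}
and the ratio $\mu(2S)/\mu(B)$ is unbounded: $B$ may be an arbitrarily small ball deep inside $S$, and no doubling hypothesis relates $\mu(S)$ to $\mu(B)$. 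This step cannot be closed as written. The paper avoids it with a three-piece splitting $f=f\chi_{5B}+f\chi_{(5S)\setminus(5B)}+f\chi_{\cx\setminus(5S)}$: the piece local to $B$ is handled by $L^2$-boundedness and the doubling of $B$; the intermediate annular piece is estimated pointwise on $B$ by the size condition \eqref{e4.1}, giving
\begin{equation*}
T_\ez\lf(f\chi_{(5S)\setminus(5B)}\r)(x)\ls\|f\|_{L^\fz(\mu)}\int_{(5S)\setminus B}\frac{d\mu(z)}{\lz(c_B,\,d(z,\,c_B))}\ls[1+\dz(B,\,S)]\|f\|_{L^\fz(\mu)},
\end{equation*}
and this annulus, not the oscillation of the far part, is where the coefficient $[1+\dz(B,\,S)]$ comes from; the piece far from $S$ is then compared with $T_*f(y)$ for $y\in S$ via the analogues of \eqref{e4.9} and \eqref{e4.10} relative to $S$, contributing only $\ls\|f\|_{L^\fz(\mu)}$.

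A second gap: for $f\in L^\fz(\mu)$ belonging to no $L^p(\mu)$ with $p\in[1,\,\fz)$, the integral in \eqref{e4.4} need not converge, so $T_\ez f$ and $T_*f$ are not defined by the formulas your argument manipulates; your proof silently assumes absolute convergence. The paper first proves the theorem for $f\in L^\fz(\mu)\cap L^{p_0}(\mu)$ and then extends $T_\ez$ to all of $L^\fz(\mu)$ in the standard way: fixing $x_0\in\cx$ and a ball $B(x_0,\,r)$ with $r>3\ez$, one sets, for $x\in B(x_0,\,r)$,
\begin{equation*}
T_\ez f(x)\equiv T_\ez\lf(f\chi_{B(x_0,\,3r)}\r)(x)+\int_{d(x,\,y)>\ez}[K(x,\,y)-K(x_0,\,y)]f(y)\chi_{\cx\setminus B(x_0,\,3r)}(y)\,d\mu(y),
\end{equation*}
where both terms are now convergent; since this modification changes the operator only by additive constants on balls, Remark \ref{r2.1}(i) allows one to repeat the argument. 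Without this step the conclusion is not even meaningful for general $f\in L^\fz(\mu)$. Apart from these two points --- the missing intermediate annulus in \eqref{e2.9} and the missing extension of $T_\ez$ to $L^\fz(\mu)$ --- your reduction via Lemma \ref{l3.1} and Proposition \ref{p2.3} and your treatment of the averaged estimate coincide with the paper's proof.
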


\begin{proof}
First we claim that there exists a positive constant $C$ such that
for all $f\in L^{\infty}(\mu)\cap L^{p_0}(\mu)$, $p_0\in[1,\,\fz)$,
and $(6,\,\bz_6)$-doubling balls $B$,
\begin{equation}\label{e4.6}
\frac{1}{\mu(B)}\int_B T_*f(x)\,d\mu(x)\le
C\|f\|_{L^{\infty}(\mu)}+\inf_{y\in B}T_*f(y).
\end{equation}

To prove this, we decompose $f$ as
$$f=f\chi_{5B}+f\chi_{\cx\setminus(5B)}\equiv f_1+f_2.$$
By the H\"older inequality and the $L^2(\mu)$-boundedness of $T_*$,
we have
\begin{align}\label{e4.7}
\frac1{\mu(B)}\int_B T_*f_1(x)\,d\mu(x) &\le\frac1{[\mu(B)]^{1/2}}
\lf\{\int_\cx[T_*(f\chi_{5B})(x)]^2\,d\mu(x)\r\}^{1/2}\\
&\ls\frac1{[\mu(B)]^{1/2}}
\lf\{\int_\cx|f\chi_{5B}(x)|^2\,d\mu(x)\r\}^{1/2}\nonumber\\
&\ls\frac{[\mu(5B)]^{1/2}}{[\mu(B)]^{1/2}}\|f\|_{L^\fz(\mu)}
\ls\|f\|_{L^\fz(\mu)}.\nonumber
\end{align}
From \eqref{e1.3} and \eqref{e1.2}, we deduce that for any ball $B$,
$y\not\in 5B$ and $x\in B$,
\begin{equation}\label{e4.8}
\lz(c_B,\,d(y,\,c_B))\sim
\lz(y,\,d(y,\,c_B))\sim\lz(y,\,d(y,\,x))\sim\lz(x,\,d(y,\, x)).
\end{equation}
Notice that
$$
\{y\in\cx:\,d(x,\,y)>6r_B\,\,{\rm for\,\,some}\,\,x\in
B\}\subset[\cx\setminus (5B)].
$$
It then follows from \eqref{e4.1}, \eqref{e4.8} and Lemma
\ref{l2.1}(ii) that for all $y\in B$,
\begin{align}\label{e4.9}
T_*f_2(y)&\le\max\lf\{\sup_{\ez\ge 6r_B}\lf|T_\ez
f_2(y)\r|,\,\sup_{0<\ez<6r_B}\lf|T_\ez f_2(y)\r|\r\}\\
&\le\max\lf\{T_*f(y),\,\sup_{0<\ez<6r_B}\lf|\int_{d(y,\,z)>6r_B}
K(y,\,z)f_2(z)\,d\mu(z)\r.\r.\nonumber\\
&\hs\lf.\lf.+\int_{\ez<d(y,\,z)\le6r_B}
K(y,\,z)f_2(z)\,d\mu(z)\r|\r\}\nonumber\\
&\le
T_*f(y)+C\|f\|_{L^\fz(\mu)}\sup_{0<\ez<6r_B}\int_{(7B)\setminus(5B)}
\frac1{\lz(y,\,d(y,\,z))}\,d\mu(z)\nonumber\\
&\le T_*f(y)+C\|f\|_{L^\fz(\mu)}\int_{(8B)\setminus B}
\frac1{\lz(c_B,\,d(z,\,c_B))}\,d\mu(z)\nonumber\\
&=T_*f(y)+C\|f\|_{L^\fz(\mu)}\dz(B,\,4B)\le
T_*f(y)+C\|f\|_{L^\fz(\mu)},\nonumber
\end{align}
where $C$ is a positive constant independent of $f$ and $y$. Thus,
the proof of the estimate \eqref{e4.6} is reduced to proving that
for all $x,\,y\in B$,
\begin{equation}\label{e4.10}
|T_*f_2(x)-T_*f_2(y)|\ls \|f\|_{L^{\infty}(\mu)}.
\end{equation}
To this end, for any $\ez\in(0,\,\fz)$, write
\begin{align*}
&|T_\ez f_2(x)-T_\ez f_2(y)|\\
&\hs=\lf|\int_{d(x,\,z)>\ez}
K(x,\,z)f_2(z)\,d\mu(z)-\int_{d(y,\,z)>\ez}
K(y,\,z)f_2(z)\,d\mu(z)\r|\\
&\hs\le\int_{\gfz{d(x,\,z)>\ez}{d(y,\,z)>\ez}}|
K(x,\,z)-K(y,\,z)||f_2(z)|\,d\mu(z)\\
&\hs\hs+\int_{\gfz{d(x,\,z)>\ez}{d(y,\,z)\le\ez}}|K(x,\,z)f_2(z)|\,d\mu(z)\\
&\hs\hs+\int_{\gfz{d(y,\,z)>\ez}{d(x,\,z)\le\ez}}|K(y,\,z)f_2(z)|\,d\mu(z)
\equiv{\rm J_1}+{\rm J_2}+{\rm J_3}.
\end{align*}

By \eqref{e4.2}, \eqref{e4.8} and \eqref{e1.2}, we have that for all
$x,\,y\in B$,
\begin{align*}
{\rm J_1}
&\le\int_{\cx\setminus (5B)}|K(x,\,z)-K(y,\,z)||f(z)|\,d\mu(z)\\
&\ls\|f\|_{L^\fz(\mu)}\int_{\cx\setminus
(5B)}\frac{[d(x,\,y)]^\sz}{[d(x,\,z)]^\sz\lz(x,\,d(x,\,z))}\,d\mu(z)\\
&\ls\|f\|_{L^\fz(\mu)}\int_{\cx\setminus
(5B)}\lf[\frac{r_B}{d(z,\,c_B)}\r]^\sz
\frac1{\lz(c_B,\,d(z,\,c_B))}\,d\mu(z) \ls\|f\|_{L^\fz(\mu)}.
\end{align*}

Now we estimate ${\rm J_2}$. Notice that if $z\not\in 5B$ and $x\in
B$, then $d(x,\,z)>4r_B$. Therefore, for any $\ez\in(0,\,4r_B]$ and
$x,\,y\in B$, $\{z\not\in 5B:\,d(x,\,z)>\ez\,\,{\rm and}\,\,
d(y,\,z)\le\ez\}=\emptyset$. So, we only need to consider the case
that $\ez\in(4r_B,\,\fz)$. In this case, there exists a unique
$m\in\nn$ such that $2^{m-1}r_B<\ez\le2^m r_B$, which leads to that
$$
\{z\not\in 5B:\,d(x,\,z)>\ez\,\,{\rm and}\,\,
d(y,\,z)\le\ez\}\subset [2^{m+1}B\setminus(\max(2,\,2^{m-1}-1)B)].
$$
This, together with \eqref{e4.1}, \eqref{e4.8} and  Lemma
\ref{l2.1}(ii), shows that
\begin{align*}
{\rm
J_2}&\ls\|f\|_{L^\fz(\mu)}\int_{2^{m+1}B\setminus(\max(2,\,2^{m-1}-1)B)}
\frac1{\lz(c_B,\,d(z,\,c_B))}\,d\mu(z)\\
&\ls\|f\|_{L^\fz(\mu)}\dz(\max(2,\,2^{m-1}-1)B,\,2^m
B)\ls\|f\|_{L^\fz(\mu)}.
\end{align*}

An argument similar to the estimate of ${\rm J_2}$ also yields that
${\rm J_3}\ls\|f\|_{L^\fz(\mu)}$. Combining the estimates for ${\rm
J_1}$ through ${\rm J_3}$ implies \eqref{e4.10} and hence
\eqref{e4.6} holds.

Thus, by \eqref{e4.6}, we know that if $f\in L^{\infty}(\mu)\cap
L^{p_0}(\mu)$ with $p_0\in[1,\,\fz)$, then $T_*f$ is $\mu$-finite
almost everywhere and in this case, by \eqref{e4.6} again, we have
that
\begin{equation*}
\frac{1}{\mu(B)}\int_B\lf[T_*f(x)-{\mathop\einf_{y\in B}}T_*
f(y)\r]\,d\mu(x)\ls \|f\|_{L^\fz(\mu)},
\end{equation*}
provided that $B$ is a $(6,\,\bz_6)$-doubling ball. To prove
$T_*f\in\rblo(\mu)$, by Proposition \ref{p2.3}, we still need to
prove that $T_*f$ satisfies \eqref{e2.9}. Let $B\subset S$ be any
two $(6,\,\bz_6)$-doubling balls. For any $\ez\in(0,\,\fz)$, $x\in
B$ and $y\in S$, we set
\begin{align*}
T_\ez f(x)&=T_\ez
(f\chi_{5B})(x)+T_\ez(f\chi_{(5S)\setminus(5B)})(x)\\
&\hs+\lf[T_\ez(f\chi_{\cx\setminus(5S)})(x)-T_\ez(f\chi_{\cx\setminus(5S)})(y)\r]
+T_\ez(f\chi_{\cx\setminus(5S)})(y).
\end{align*}
By an estimate similar to that of \eqref{e4.9}, we have that for all
$y\in S$,
$$
T_*(f\chi_{\cx\setminus(5S)})(y)\le T_*f(y)+C\|f\|_{L^\fz(\mu)},
$$
where $C$ is a positive constant independent of $f$ and $y$. On the
other hand, by the estimate same as that of \eqref{e4.10}, we have
that for all $x,\, y \in S$,
$$
\lf|T_\ez(f\chi_{\cx\setminus(5S)})(x)
-T_\ez(f\chi_{\cx\setminus(5S)})(y)\r|\ls\|f\|_{L^{\infty}(\mu)}.
$$
For all $x\in B$, if $z\not\in 5B$, then $d(x,\,z)\ge 4r_B$, which,
together with \eqref{e4.4}, \eqref{e4.1} and \eqref{e4.8}, shows
that
\begin{align*}
T_\ez(f\chi_{(5S)\setminus(5B)})(x)&=\int_{d(x,\,z)>\ez}
K(x,\,z)f\chi_{(5S)\setminus(5B)}(z)\,d\mu(z)\\
&\ls\|f\|_{L^\fz(\mu)}\int_{(5S)\setminus(5B)}|K(x,\,z)|\,d\mu(z)\\
&\ls\|f\|_{L^\fz(\mu)}\int_{(5S)\setminus(5B)}\frac{1}{\lz(x,\,d(x,\,z))}\,d\mu(z)\\
&\ls\|f\|_{L^\fz(\mu)}\int_{(5S)\setminus
B}\frac{1}{\lz(c_B,\,d(z,\,c_B))}\,d\mu(z)\\
&\ls[1+\dz(B,\,S)]\|f\|_{L^\fz(\mu)}.
\end{align*}
Thus,
$$
T_*f(x)\ls
T_*(f\chi_{5B})(x)+[1+\dz(B,\,S)]\|f\|_{L^\fz(\mu)}+T_*f(y).
$$
Taking mean value over $B$ for $x$, and over $S$ for $y$, we then
obtain
$$
m_B(T_*f)-m_S(T_*f)\ls[1+\dz(B,\,S)]\|f\|_{L^\fz(\mu)},
$$ where we used \eqref{e4.7}. This
finishes the proof of Theorem \ref{t4.1} in the case of $f\in
L^{\infty}(\mu)\cap L^{p_0}(\mu)$ with $p_0\in[1,\,\fz)$.

If $f\in L^\fz(\mu)$ and $f\not\in L^p(\mu)$ for all
$p\in[1,\,\fz)$, then the integral
$$\int_{d(x,\,y)>\ez}K(x,\,y)f(y)\,d\mu(y)$$
may not be convergent. The operator $T_\ez$ can be extended to the
whole space $L^\fz(\mu)$ by following the standard arguments (see,
for example, \cite[p.\,105]{t01}): Fix any point $x_0\in\cx$. For
any given ball $B(x_0, r)$ centered at $x_0\in\cx$ with the radius
$r>3\ez$, we write $f=f_1+f_2$, with $f_1\equiv f\chi_{B(x_0,3r)}$.
For $x\in B(x_0,r)$, we then define
$$
T_\ez f(x)=T_\ez
f_1(x)+\int_{d(x,\,y)>\ez}[K(x,\,y)-K(x_0,\,y)]f_2(y)\,d\mu(y).
$$
Now both integrals in this equation are convergent. Using this
definition, Remark \ref{r2.1}(i) and then repeating the argument as
above then completes the proof of Theorem \ref{t4.1}.
\end{proof}

\bigskip

\noindent Haibo Lin

\medskip

\noindent College of Science, China Agricultural University, Beijing
100083, People's Republic of China

\medskip

\noindent{\it E-mail address}: \texttt{haibolincau@126.com}

\bigskip

\noindent Dachun Yang (Corresponding author)

\medskip

\noindent School of Mathematical Sciences, Beijing Normal
University, Laboratory of Mathematics and Complex systems, Ministry
of Education, Beijing 100875, People's Republic of China

\medskip

\noindent{\it E-mail address}: \texttt{dcyang@bnu.edu.cn}

\end{document}